\DeclareMathOperator{\dive}{div}
\DeclareMathOperator{\tr}{tr}
\newcommand{\dft}{d}
\newcommand{\eps}{\varepsilon}
\newcommand{\R}{\ensuremath{\mathbb{R}}}
\newcommand{\Rn}{\ensuremath{{\mathbb{R}^n}}}
\newcommand{\N}{\ensuremath{\mathbb{N}}}
\newcommand{\Ex}{\ensuremath{\mathbb{E}}} 
\newcommand{\LL}{\ensuremath{\mathcal{L}}}
\newcommand{\Ha}{\ensuremath{\mathcal{H}}}
\def\Per{\mathcal{P}}
\def\Prob{\ensuremath{{\mathbb{P}}}}
\newcommand{\ue}{u_\varepsilon} 
\newcommand{\En}{E_\varepsilon} 
\newcommand{\F}{\mathcal{F}} 
\newcommand{\ar}{\mathcal{A}} 
\theoremstyle{plain}
\numberwithin{equation}{section}
\newtheorem{lemma}{Lemma}[section]
\newtheorem{theorem}[lemma]{Theorem}
\newtheorem{proposition}[lemma]{Proposition}
\theoremstyle{definition}
\newtheorem{remark}[lemma]{Remark}
\begin{document}
\title[Stochastic Allen--Cahn equation]{Tightness for a stochastic Allen--Cahn equation}

\author{Matthias R{\"o}ger}
\address{Matthias R\"oger, Technische Universit\"at Dortmund,
Faculty of Mathematics,
Vogelpothsweg 87,
44227 Dortmund, Germany}

\author{Hendrik Weber}
\address{Hendrik Weber, Mathematics Institute, 
University of Warwick, Coventry CV4 7AL - United Kingdom}

\email{matthias.roeger@tu-dortmund.de, hendrik.weber@warwick.ac.uk}

\subjclass[2000]{Primary 60H15 ; Secondary 53C44 }

\keywords{Stochastic Allen--Cahn equation, sharp interface limit, stochastic mean curvature flow}

\thanks{This work was partially supported by the DFG Forschergruppe FOR 718 \emph{Analysis and Stochastics in Complex Physical Systems}}
\date{\today}

\begin{abstract}
We study an Allen--Cahn equation perturbed by a multiplicative stochastic 
noise that is white in time and correlated in space. Formally this equation
approximates a stochastically forced mean curvature flow. We derive a uniform bound for the diffuse surface area,
prove the tightness of solutions in the sharp interface limit, and show the convergence to phase-indicator functions. 

\end{abstract}

\maketitle
\section{Introduction}
\label{sec:intro}
The Allen--Cahn equation
\begin{gather}
\eps \partial_t u_\eps \,=\, \eps\Delta u_\eps -\frac{1}{\eps}F'(u_\eps)
\label{eq:AC}
\end{gather}
is an important prototype for phase separation processes in melts or alloys that is of
fundamental interest both for theory and applications. It describes an
evolution of non-conserved phases  driven by the surface area reduction of their common interface.
The Allen--Cahn equation is a \emph{diffuse interface} model, {\itshape i.e.}
phases are indicated by smooth fields, assuming a partial mixing of the
phases. It
is well-known \cite{MoSc90,EvSS92,Ilma93}  that in the \emph{sharp interface limit} $\eps\to
0$ solutions of the Allen--Cahn equation converge to an evolution of
hypersurfaces $(\Gamma_t)_{t\in 
(0,T)}$ by \emph{mean curvature flow} (MCF)
\begin{gather}
v(t,\cdot)\,=\, H(t,\cdot), \label{eq:fmc}
\end{gather}
where $v$ describes the velocity vector of the evolution and $H(t,\cdot)$
denotes the mean curvature vector of $\Gamma_t$.

Our goal is to introduce a stochastic perturbation of the Allen--Cahn
equation that formally approximates a stochastic mean curvature type
flow
\begin{gather}
	v(t,\cdot) \,=\, H(t,\cdot) + X(t,\cdot), \label{eq:smcf-formal}
\end{gather}
where $X$ now is a random vector-field in the ambient space.
More specifically what we are considering is the following Stratonovich stochastic
partial differential equation (SPDE):
\begin{gather}
  \dft\ue\,=\, \Big( \Delta \ue -\frac{1}{\eps^2}F'(\ue)\Big) dt +
  \nabla\ue\cdot X(x,\circ dt), \label{eq:sac}
\end{gather}
where $X$ is a vectorfield valued Brownian motion. A particular case of such 
a Brownian motion is 
\begin{gather}
 X(t,x)=X^{0}(x)t+\sum_{k=1}^N X^{k}(x)  B_k(t) ,
\end{gather}
where the $X^{k}$ are fixed vectorfields and $W_k$ are independent standard 
Brownian motions. In this case \eqref{eq:sac} reduces to the Stratonovich
SPDE
\begin{gather}
  \dft\ue\,=\, \Big( \Delta \ue -\frac{1}{\eps^2}F'(\ue) +
 \nabla\ue\cdot X^{0} \Big) dt+ \sum_{k=1}^N \nabla\ue\cdot X^{k} \circ dB_k(t).  \label{eq:sac1}
\end{gather}
Our setting is more general as it allows for infinite sums of Brownian motions. 
See below for a more detailed discussion.
We complement \eqref{eq:sac} by deterministic 
initial and zero Neumann-boundary data, 
\begin{align}
u_\eps(0,\cdot)\,&=\, u_\eps^0\quad\text{ in } U, 
\label{eq:init}
\\
\nabla u_\eps\cdot\nu_U\,&=\, 0\quad\text{ on }(0,T)\times
\partial U,
\label{eq:bdry}
\end{align} 
where $\nu_U$ denotes the outer normal vector to $\partial U$.

Our main result is the tightness of the solutions 
$(u_\eps)_{\eps>0}$ of \eqref{eq:sac} and the convergence to an
evolution of (random) phase indicator functions 
$u(t,\cdot)\in BV(U)$.  In particular we prove 
a uniform control (in $\eps>0$) of the diffuse surface area of $u_\eps$.

In the next sections we briefly review the analysis of the
deterministic Allen--Cahn equation and report on stochastic
extensions. In Section \ref{sec:prelim} we state our main assumptions
and recall some notations for stochastic flows.
Our main results are stated in Section \ref{sec:res}. In Section
\ref{sec:ExUn} we prove an existence result for \eqref{eq:sac}. The main estimates for the diffuse surface area and the
tightness of solutions are proved in Section \ref{sec:Tightness}. In the last section we present a discussion of the relation to a stochastically perturbed mean curvature flow by a formal passage to the sharp interface limit in a localized surface energy equality.
\subsection*{Acknowledgment}
This work was partially supported by the DFG Forschergruppe 718 \emph{Analysis and Stochastics in Complex Physical Systems}.
\section{Background}
\label{sec:back}
\subsection{Deterministic sharp interface limit}
\label{subsec:det}
As many other diffuse interface models the Allen--Cahn equation
\eqref{eq:AC} is based on the \emph{Van~der~Waals--Cahn--Hilliard} energy
\begin{gather}
E_{\varepsilon}(u_\eps)\,:=\,\int_{U}\Big(\frac{\varepsilon}{2}|\nabla u_\eps|^2 
+\frac{1}{\varepsilon}F(u_\eps)\Big)\, dx\qquad\text{ for }u_\eps:U\to\R. \label{eq:def-E_eps}
\end{gather}
The energy $E_\eps$
favors a decomposition of the spatial domain $U$ into two regions (phases) where
$u_\eps\approx -1$ and $ u_\eps\approx 1$, separated by a transition layer (diffuse
interface) with a thickness of order $\eps$. Modica and Mortola
\cite{MoMo77,Modi87} proved that $E_\eps$  
Gamma-converges (with respect to $L^1$-convergence) to a constant
multiple of the perimeter functional $\Per$, restricted to phase indicator
functions,
\begin{gather*}
E_\eps\,\to\, c_0\Per,\qquad
\Per(u)\,:=\, 
\begin{cases}
  \frac{1}{2}\int_U \,d|\nabla u| &\text{ if } u\in BV(U,\{-1,1\}),\\
  \infty &\text{ otherwise.}
\end{cases}
\end{gather*}
$\Per$ measures the surface-area of the phase boundary
$\partial^*\{u=1\}\cap U$. In this sense $E_\eps$ describes a diffuse
approximation of the surface-area functional.

The Allen--Cahn equation \eqref{eq:AC} in fact is the (accelerated)
$L^2$-gradient flow of $E_\eps$. It is
proved in different formulations \cite{MoSc90,EvSS92,Ilma93} that 
\eqref{eq:AC} converges to motion by mean curvature. Since mean
curvature flow in general allows for the formation of singularities 
in finite time it is necessary to consider suitable generalized
formulations of \eqref{eq:fmc}, as for example in the sense of
viscosity solutions
\cite{BaSS93,ChGG91,CrIL92,EvSS92,EvSp91}, De 
Giorgi's barriers \cite{BeNo97,BePa95,ChNo08,DiLN01}, or
geometric measure theory. The first approaches rely on the
maximum principle, the latter was pioneered by Brakke \cite{Brak78} and is
based on the \emph{localized} energy equality
\begin{align}
&\frac{d}{dt}\int_{\Gamma_t}\eta(x)\,d\Ha^{n-1}(x) \notag\\
&\, =\,
\int_{\Gamma_t}\nabla\eta(t,x)\cdot V(t,x)\,d\Ha^{n-1}(x) -
 \int_{\Gamma_t}H(t,x)\cdot V(t,x)\eta(t,x)\,d\Ha^{n-1}(x) \label{eq:loc-en-mcf}
\end{align}
that holds for arbitrary $\eta\in C^1_c(U)$ and for any classical solutions
$(\Gamma_t)_{t\in (0,T)}$ of mean curvature flow. Ilmanen \cite{Ilma93} proved the
convergence of the Allen--Cahn equation to mean curvature flow in the
formulation of Brakke, using a diffuse analog of the (localized) energy
equality \eqref{eq:loc-en-mcf}. By similar
methods Mugnai and the first author \cite{MuRoe11} proved the convergence of
(deterministically) perturbed Allen--Cahn equations.

One of the key results of the present paper is an energy inequality for
the stochastic Allen--Cahn equation \eqref{eq:sac}. By It\^{o}'s 
formula the stochastic drift produces some extra terms in the
time-derivative of the diffuse surface energy $E_\eps(u_\eps)$. These
`bad' terms are exactly compensated by the additional terms
in \eqref{eq:sac} which are hidden in the Stratonovich formalism. 
\subsection{Stochastic perturbations of the Allen--Cahn equation and MCF}
\label{subsec:stoch-pert}
Additive perturbations of the Allen--Cahn equation were studied in the one-dimensional
case in \cite{Fu95,BdMP95} and in the higher-dimensional case in \cite{Fu99,We08,LioSou98}.
Note that perturbation results such as \cite{MuRoe11} do not apply to the stochastic case as one 
typically perturbs with a white noise i.e. the time-derivative of a
$C^\alpha$ function for $\alpha< \frac{1}{2}$,
which is not covered by most techniques.

In the one-dimensional case the equation was studied with an additive space-time white 
noise and at least for the case where the interface consists of a single kink the sharp interface 
limit was described rigorously \cite{Fu95,BdMP95}. In higher dimensions
the picture is much less complete. For instance, the
Allen--Cahn equation with space-time white noise is in general not well-posed:
the noise term is so rough that for $n \geq 2$ solutions to the
stochastic heat equation attain values only in Sobolev spaces 
of negative order, and on such spaces the nonlinear potential can a priori not be defined. This existence 
problem can be avoided if one introduces spatial correlations as we do in \eqref{eq:sac}. In all of
the above papers conditions on the stochastic perturbations are much more restrictive than
in our approach. In fact it is always
assumed that the noise is constant in space and smoothened in time with a correlation length that 
is coupled to the interface width $\eps$ and goes to zero for $\eps \downarrow 0$. All of these papers rely 
on a construction of the limit dynamics by different means and then an explicit construction of sub- and 
supersolutions making use of the maximum principle. Our approach is
based only on energy estimates. On the other hand, we only prove
tightness of the approximations and do not obtain an evolution
law for limit points.

The restriction on spatially constant noise in previous papers and our
problem to derive the stochastic motion law in the limit are closely related
to the lack of existence results and generalized formulations
for stochastically forced mean curvature flow. Up to now there are only
results for spatially constant forcing \cite{DLN01, Fu99, LioSou98} or
in the case of evolution of graphs in $1+1$ dimensions \cite{ESvR09}.

Our approach is closely related to Yip's construction  \cite{Yi98} of a
time-discrete stochastically forced mean curvature flow. 
Yip follows the deterministic scheme of \cite{ATW93,LuSt95}, where for a given time step $\delta>0$ 
a sequence of sets of bounded perimeter is constructed iteratively. The heart of the
construction is the minimization of a functional that is given by the
perimeter plus a suitable distance from the previous set. Yip \cite{Yi98}
introduces randomness to this scheme by performing a stochastic flow in
between two minimization steps. For the resulting time-discrete evolution
of sets Yip proves uniform bounds (in $\delta$) for
the perimeter and shows tightness of the time-discrete solutions
with $\delta\to 0$. As in our case, a characterization of the limiting
evolution is not given. If one applies Yip's scheme to the Allen--Cahn
equation (substituting the perimeter functional by the diffuse surface area
energy and using a rescaled $L^2$-distance between phase fields) one in
fact would obtain our stochastic Allen--Cahn equation 
\eqref{eq:sac} in the limit $\delta\to 0$.

Noisy perturbations of the Allen--Cahn equation were studied from a different point of view
in \cite{KORV07}. There the authors study the action functional which
appears if one first applies Freidlin-Wentzel theory to the Allen--Cahn equation
with an additive noise that is white in time and spatially correlated, and then
formally takes the spatial correlation to zero. Then the sharp
interface limit $\varepsilon \downarrow 0$ is studied on the level of 
action functionals and a \emph{reduced action functional} as a
possible $\Gamma$-limit is derived. See \cite{WeTo07,MuRoe08} for a subsequent analysis.
\section{Assumptions and stochastic flows}
\label{sec:prelim}
\subsection{Notation and assumptions}
\label{subsec:nota}
Let $U\subset \Rn$ be an open bounded subset of $\R^n$ with smooth
boundary, let $T>0$, and set $U_T:= (0,T)\times U$. We denote by $x\in U$ and 
$t\in (0,T)$ the space- and time-variables respectively; $\nabla$
and $\Delta$ denote the spatial gradient and Laplacian. The space of functions of bounded variation in $U$ with values in $\{-1,1\}$ almost everywhere is denoted with $BV(U;\{-1,1\})$. $C^{k,\alpha}(\overline{U})$ denotes the space of $k$-times differentiable functions such that all  $k^{\text{th}}$-partial derivatives are bounded and H\"older-continuous with exponent $\alpha$.

We assume the potential $F$ to be smooth and verify the following assumptions:
\begin{equation}\label{assumptions}
\left.
\begin{split}
	&F(r) \geq 0 \qquad \text{and} \quad F(r)=0 \quad \text{iff } r=\pm 1, \\
	&F' \text{ admits exactly three zeros $\{\pm 1,0 \}$ and $F''(0) < 0$, $F''(\pm 1)>0$},\\
	&F \text{ is symmetric, } \forall r \geq 0 \quad F(r)=F(-r),\\
	& F(r) \geq C |r|^{2+\delta} \qquad \text{for some $\delta>0$ and $|r|$ sufficiently large}.
\end{split}
\right\}
\end{equation}
The standard choice for $F$ is
\begin{gather*}
F(r)\,=\,\frac{1}{4}(1-r^2)^2,
\end{gather*}
such that the nonlinearity in \eqref{eq:AC} becomes $F'(r)=-r(1-r^2)$.

Next we give some geometric meaning to $\ue$.
We define the \emph{normal direction} with respect to $\ue$ by
\begin{gather}
\nu_\eps(t,x) \,:=\, \begin{cases} 
  \frac{\nabla \ue}{|\nabla \ue|}(t,x) &\text{ if } |\nabla\ue(t,x)|\neq 0,\\
  \vec{e} &\text{ else,}
  \end{cases}
  \label{eq:def-nu}
\end{gather}
where $\vec{e}$ is an arbitrary fixed unit vector.
We define the \emph{diffuse surface area measures}
\begin{gather}
\mu_\eps^t(\eta)\,:=\, \int_U \eta\Big(\frac{\eps}{2}|\nabla \ue(t,\cdot)|^2
+\frac{1}{\eps}F(\ue(t,\cdot))\Big)\,dx \label{eq:def-mu}
\end{gather}
for $\eta \in C^0_c(U)$. We denote the \emph{diffuse mean curvature} by
\begin{gather}\label{eq:def-w_eps}
w_\eps\,:=\, -\eps\Delta\ue + \frac{1}{\eps}F'(\ue).
\end{gather}

For the initial data we assume that $u_\eps^0$ is smooth and that
\begin{gather}
E_\eps(u_\eps^0)\,\leq\, \Lambda \label{eq:ass-init} 
\end{gather}
holds for all $\eps>0$ and a fixed $\Lambda>0$. Note that by \cite[page 423]{Ilma93} 
the boundary of every open set that verifies a density bound and that can be approximated in $BV$ by 
smooth hypersurfaces  can be approximated by phase fields with uniformly bounded diffuse surface area. On the other hand \eqref{eq:ass-init} implies by
\cite{Mo87, MoMo77} that the sequence $u_\eps^0$ is compact 
in $L^1(U_T)$ and that every limit belongs to the space of phase indicator functions $BV(U, \{ \pm 1 \})$.

\subsection{Stochastic Flows}
\label{subsec:StochFlo}
Let us briefly introduce some notations for stochastic flows. We refer the reader to 
Kunita's book \cite{Kun97} Chapter 3 and Section 2,5 and 6 in Chapter 4 for further background.

Let $(\Omega,\F,P)$ be a probability space with a right continuous and complete filtration
$\{\F_{s,t}\}_{0\, \leq s \leq\, t\,\leq T}$. Let $(X(t,x),  t \in [0,T], x \in U)$ be a continuous vectorfield valued semimartingale
with local characteristics $\tilde{A}:=(\tilde{a}_{ij}(t,x,y))_{i,j=1,\ldots,n}$ and $b:=(b_i(t,x))_{i=1\ldots,n}$  on $(\Omega,\F,P)$. This means that for every $x\in U$ the process $X(t,x)$ is a continuous $\R^n$ valued semimartingale
 with finite variation process $\int_0^t b(s,x)ds$ and quadratic variation 
\begin{gather}
  \langle X_i(t,x), X_j(t,y)\rangle \, = \, \int_0^t \tilde{a}_{ij}(s,x,y)ds \label{eq:quadrvar}. 
\end{gather}
 We assume that for every $(x,y)\in U\times U$ the function $\tilde{a}$ is continuous in time and of class $C^{4,\alpha}$ in both
space variables, and that  $x\mapsto b^i(t,x)$ is of class $C^{3,\alpha}$ for some $\alpha >0$.
Finally we assume that 
$\tilde{a}$ and $b$ have compact support in $U \times U$ resp. in $U$.

Denote by $(\varphi_{s,t}, s < t)$ the Stratonovich-Flow associated to $-X$.
 This means that almost surely $(\varphi_{s,t}, s < t)$ is a two parameter family of diffeomorphisms of $U$ fixing the boundary and verifying the flow property
\begin{gather}
  \varphi_{s,t} \circ   \varphi_{r,s}= \varphi_{r,t} \qquad \text{for $r \leq s \leq t$} \label{eq_Flow}.
\end{gather}
Furthermore, for every $x$ and every $s \in [0,T)$ the process $ (\varphi_{s,t}(x), t \geq s)$ is a solution of the 
stochastic differential equation
\begin{align}
d \varphi_{s,t}(x)\,= \,&  -X(\circ dt, \varphi_{s,t}(x)) \label{eq_Flowsde}\\
 \varphi_{s,s}(x)\,= \,& x \notag.
\end{align}
Under the above regularity assumption for all $s \leq t$ the mapping $\varphi_{s,t}$ is a $C^{3,\beta}$ diffeomorphism of 
$U$, for all $\beta < \alpha$. 

A particular example is that of a stochastic flow given by a usual Stratonovich-differential equation. If $X^{k}(t,\cdot), k= 0, \ldots, N$ 
are smooth time dependent vectorfields on $U$ and $B_1, \ldots B_N$ are independent standard Brownian motions, then 
\begin{gather}
 X(t,x)= \sum_{k=1}^N \int_0^t X^{k}(s,x) \circ d B_k(s) + \int_0^t X^{0}(s,x) ds, \label{eq_SDEBM}
\end{gather}
is a vectorfield valued Brownian motion as considered above. Its local characteristic is given by 
\begin{align}
 \tilde{a}_{ij}(s,x,y) \,=\,& \sum_{k=1}^N X^{k}_i(s,x) \, X^{k}_j(s,y)  \label{eq_loccharSDE}\\
b_i(s,x)\, =\,& X^{0}_i(s,x). \notag
\end{align}
In this case the stochastic differential equation \eqref{eq_Flowsde} reduces to 
the more familiar 
\begin{align}
d \varphi_{s,t}(x)\,= \,& X^{0}(t,\varphi_{s,t}(x)) dt + \sum_{k=1}^N X^{k}(t,\varphi_{s,t}(x)) \circ dB_k(t)  \label{eq_Flowsde2}\\
 \varphi_{s,s}(x)\,= \,& x \notag.
\end{align}
The advantage of Kunita's framework is that it allows for infinite sums in the noise part i.e. for noise fields of the form
\begin{gather}
 X(t,x)= \sum_{i=1}^\infty \int_0^t X^{k}(s,x)\circ d B_k(s) + \int_0^t X^{0}(s,x) ds, \label{eq_SDEBM2}
\end{gather}
for vectorfields with the right summability properties. We prefer this approach as a restriction to 
finite dimensional noises is unnecessary and a severe restriction.
\section{Results}
\label{sec:res}
In this section we state our main results. For the proofs see the
subsequent sections. We first address the question of existence and
uniqueness of solutions for \eqref{eq:sac}. There are some classical
existence and uniqueness results for equations similar to
\eqref{eq:sac}, see for example \cite[p. 212 ff.]{dPZ92},
\cite{KrRo07}, \cite{Fl96}. In
those references either mild or weak variational
solutions are constructed. 
Using the technique from \cite{Kun97} we obtain here
H\"older-continuous strong solutions.
\begin{theorem}\label{thm:existence}
Let $\ue^0$, $F$, and $X$ satisfy the assumptions \eqref{assumptions},
\eqref{eq:ass-init} and the smoothness conditions stated in Section
\ref{subsec:StochFlo}. Then for every $\eps>0$ there exists a unique
solution $u_\eps$ of 
\begin{align}
 u_\eps(t,x)\,&=\, u_\eps^0(x)+\int_0^t \Big(\Delta
 u_\eps(s,x)-\frac{1}{\eps^2}F'(u_\eps(s,x)) \Big) \,ds \,+\notag\\ 
 & \qquad \qquad \qquad +\int_0^t \nabla
 u_\eps(s,x) \cdot X(x,\circ ds) \label{eq:sac2}\\ 
  \nabla u_\eps\cdot \nu_U\,&=\, 0\quad \text{ on }(0,T)\times
  \partial U . \label{eq:sac-bdry}
\end{align}
The function $u_\eps(t,\cdot)$ is a continuous $C^{3,\beta}(\bar{U})$-valued 
semimartingale for any $0<\beta<\alpha$. Furthermore, we have the following bound for the spatial derivatives: 
\begin{align}
\Ex[\sup_{x \in U} |\partial^\gamma u_\eps(t,x)|^p]< \infty \label{eq:momboun},
\end{align}
for any multi-index $\gamma$ with $|\gamma| \leq 3$, every $p\geq1$, and for any $t\in (0,T)$. 
\end{theorem}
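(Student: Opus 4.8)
The plan is to eliminate the first-order Stratonovich transport term by composing $\ue$ with the stochastic flow $\varphi_{0,t}$ associated to $-X$, turning \eqref{eq:sac2} into a pathwise parabolic equation with random coefficients. I would set $v_\eps(t,x):=\ue(t,\varphi_{0,t}(x))$ and apply the Stratonovich form of the It\^o--Wentzell formula for the composition of a semimartingale field with a flow (Kunita \cite{Kun97}, Chapter 4). In the Stratonovich calculus no correction terms appear, and the transport contribution $\nabla\ue\cdot X(\varphi_{0,t}(x),\circ dt)$ from the equation cancels exactly with the contribution $\nabla\ue\cdot\big(-X(\circ dt,\varphi_{0,t}(x))\big)$ coming from \eqref{eq_Flowsde}. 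What survives is
\begin{gather}
dv_\eps(t,x)\,=\,\Big(\Delta\ue-\tfrac{1}{\eps^2}F'(\ue)\Big)(t,\varphi_{0,t}(x))\,dt. \notag
\end{gather}
Writing $\ue(t,y)=v_\eps(t,\varphi_{0,t}^{-1}(y))$ and differentiating by the chain rule, the term $\Delta\ue$ evaluated at $\varphi_{0,t}(x)$ becomes a second-order operator $A_t v_\eps$ whose coefficients are built from the first and second derivatives of $\varphi_{0,t}^{-1}$, while $F'(\ue)(t,\varphi_{0,t}(x))=F'(v_\eps(t,x))$. Since $X$ is compactly supported in $U$, the flow is the identity near $\partial U$, so $A_t$ reduces to $\Delta$ there and the Neumann condition is preserved.

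For each fixed $\omega$ this yields the deterministic semilinear problem
\begin{gather}
\partial_t v_\eps\,=\,A_t v_\eps-\tfrac{1}{\eps^2}F'(v_\eps),\qquad \nabla v_\eps\cdot\nu_\Omega=0\ \text{ on }\partial U,\quad v_\eps(0,\cdot)=u_\eps^0. \notag
\end{gather}
The operator $A_t=\sum_{k,l}\tilde a^{kl}\partial_k\partial_l+\sum_k\tilde b^k\partial_k$ is uniformly elliptic on compact time intervals, because its principal part is $(D\varphi_{0,t}^{-1})(D\varphi_{0,t}^{-1})^{\mathsf T}$, and its coefficients are continuous in $t$ and of class $C^{2,\beta}$ resp. $C^{1,\beta}$ in $x$ (inherited from the $C^{3,\beta}$ regularity of the flow recalled in Section \ref{subsec:StochFlo}). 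I would solve this pathwise by standard parabolic theory. First, a global $L^\infty$ bound follows from the maximum principle: at an interior spatial maximum of $v_\eps$ the elliptic part is nonpositive, so $\partial_t\max_x v_\eps\le-\tfrac{1}{\eps^2}F'(\max_x v_\eps)$, and the structural assumptions on $F$ (namely $F'(r)>0$ for $r>1$ and $F'(r)<0$ for $r<-1$, which hold since $\pm1$ are the outermost zeros of $F'$ and $F$ grows) confine $v_\eps$ to the fixed interval $[-M,M]$ with $M=\max(1,\sup|u_\eps^0|)$; in particular there is no finite-time blow-up. With this bound the nonlinearity is globally Lipschitz in the relevant range, and Schauder estimates upgrade $v_\eps(t,\cdot)$ to $C^{3,\beta}(\bar U)$ while giving uniqueness. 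Transforming back via $\ue(t,y)=v_\eps(t,\varphi_{0,t}^{-1}(y))$ and reversing the It\^o--Wentzell computation shows $\ue$ solves \eqref{eq:sac2}; since $\varphi_{0,t}^{-1}$ is a $C^{3,\beta}$ diffeomorphism depending on $t$ as a continuous semimartingale, $\ue(t,\cdot)$ is a continuous $C^{3,\beta}(\bar U)$-valued semimartingale.

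It remains to establish \eqref{eq:momboun}. By the chain rule applied to $\ue(t,y)=v_\eps(t,\varphi_{0,t}^{-1}(y))$, each derivative $\partial^\gamma\ue$ with $|\gamma|\le3$ is a polynomial expression in the derivatives of $v_\eps$ and of $\varphi_{0,t}^{-1}$ up to order $|\gamma|$. The Schauder constant controlling $\|v_\eps(t,\cdot)\|_{C^{3,\beta}}$ is, for fixed $\eps$, a polynomial in the $C^{2,\beta}$-norms of the coefficients of $A_t$ and in the reciprocal ellipticity constant, hence ultimately a polynomial in $\sup_x|\partial^\delta\varphi_{0,t}^{-1}(x)|$ for $|\delta|\le3$ and in $\sup_x|(D\varphi_{0,t})(x)|$. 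Kunita's theory \cite{Kun97} provides finite moments of every order for these flow derivatives, so an application of H\"older's inequality gives $\Ex[\sup_{x\in U}|\partial^\gamma\ue(t,x)|^p]<\infty$.

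The main obstacle I expect is the quantitative side of the last two steps: making the pathwise $L^\infty$ and Schauder estimates explicit enough that the deterministic constants depend only polynomially on the flow-derivative norms (and on the reciprocal ellipticity constant, which must itself be controlled in $L^q$ for all $q$), so that they combine cleanly with Kunita's moment bounds to yield \eqref{eq:momboun}. The application of the It\^o--Wentzell formula and the verification that the transport terms cancel exactly --- including adaptedness and the identification of $\ue(t,\cdot)$ as a semimartingale --- is the other delicate point, although it is by now standard within Kunita's framework.
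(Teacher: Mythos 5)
Your proposal follows essentially the same route as the paper: conjugating with the stochastic flow of $-X$ to cancel the Stratonovich transport term (the paper cites Kunita, Lemma 6.2.3, for exactly this cancellation), reducing to a pathwise reaction--diffusion equation with random coefficients that is the identity-coefficient Laplacian near $\partial U$, solving it by maximum-principle a priori bounds plus Schauder estimates (the paper additionally invokes Schaefer's fixed point theorem for existence), and deriving \eqref{eq:momboun} from Kunita's moment bounds on the flow derivatives (Theorem 6.1.10 in \cite{Kun97}). The argument is correct and matches the paper's proof in all essential respects.
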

Our main result concerns the tightness of the solutions $u_\eps$ in the
limit $\eps \to 0$. In addition we show that limit points are concentrated on the space
of phase indicator function of bounded variation. The key step in the
proof of these results is a uniform bound on the diffuse surface area.
\begin{theorem}\label{thm:tightness}
Let the assumptions of Theorem \ref{thm:existence} be satisfied and let
$\ue$ be the solution of \eqref{eq:sac2}-\eqref{eq:sac-bdry} for $\eps>0$. Then the following
statements hold:
\setlength{\leftmargini}{4ex}
\renewcommand{\labelenumi}{(\arabic{enumi})}
\begin{enumerate}
\item  Uniform bounds on the energy: For every $T>0$ and every $p \geq 1$ we have
\begin{gather}
\sup_{\eps>0} \Ex \Big[ \sup_{0 \leq t \leq T} \En(\ue(t))^p \Big] < \infty. \label{eq:energy2_beta}
\end{gather}
\item Uniform bounds on the diffuse mean curvature: For every $T>0$ and every $p\geq 1$ we have
\begin{gather}
\sup_{\eps>0} \Ex \bigg[ \Big( \int_0^T \int_U \frac{1}{\eps} w_\eps (t,x)^2
\,dx dt \Big)^{p} \, \bigg] < \infty. \label{eq:energy_decay1_beta} 
\end{gather}
\item Tightness of the sequence: Let $\mathbb{Q}^\varepsilon$ be the
distribution of the solution \eqref{eq:sac}. Then the family
$\mathbb{Q}^\varepsilon$ is tight on $C([0,T], L^1(U))$. In particular,
there exists a sequence $\varepsilon_i \downarrow 0$ such that the
processes $u_{\varepsilon_i}$ can jointly be realized on a
probability space $(\tilde{\Omega},
\tilde{\mathcal{F}},\tilde{\mathbb{P}})$ and converge
$\tilde{\mathbb{P}}$-almost-surely in $C([0,T], L^1(U))$ to a
limiting process $u$. For almost all $t\in (0,T)$ we have $u(t,\cdot)\in BV(U,\{\pm 1 \}))$ almost surely and 
\begin{gather*}
  \Ex \bigg[  \sup_{0 \leq t \leq T} \|u(t)\|_{BV(U)}^p \bigg] < \infty \label{eq:energy_BV} 
\end{gather*}
holds for every $T>0$ and every $p\geq 1$.
\end{enumerate}
\end{theorem}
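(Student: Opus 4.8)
The plan is to derive everything from a single stochastic energy inequality for $\En(\ue)$, as anticipated in Section~\ref{subsec:det}. Since $w_\eps=-\eps\Delta\ue+\tfrac1\eps F'(\ue)$ from \eqref{eq:def-w_eps} is exactly the $L^2$-gradient of $\En$ and the deterministic drift in \eqref{eq:sac2} equals $-\tfrac1\eps w_\eps$, the Stratonovich chain rule gives, at least formally,
\begin{align}
\dft\,\En(\ue)\,=\,-\frac1\eps\int_U w_\eps^2\,dx\,dt+\int_U w_\eps\,\bigl(\nabla\ue\cdot X(x,\circ dt)\bigr)\,dx .
\end{align}
The starting point is the divergence-form identity for the transport term: for a fixed vectorfield $Y$ with $\supp Y\subset\subset U$, integrating the $\Delta\ue$- and $F'(\ue)$-contributions by parts yields
\begin{align}
\int_U w_\eps\,(\nabla\ue\cdot Y)\,dx\,=\,-\int_U\Bigl(\tfrac\eps2|\nabla\ue|^2+\tfrac1\eps F(\ue)\Bigr)\dive Y\,dx+\eps\int_U(\nabla Y\,\nu_\eps)\cdot\nu_\eps\,|\nabla\ue|^2\,dx ,
\end{align}
whose absolute value is bounded by $C\|Y\|_{C^1}\En(\ue)$. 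This already controls the finite-variation part of the noise.

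The essential and hardest point is the passage to It\^o form. Converting the Stratonovich integral above produces a correction term that has to be combined with the second-order It\^o term stemming from the quadratic variation \eqref{eq:quadrvar} of $\ue$ paired against the second variation of $\En$. I expect these two a priori unbounded contributions to reassemble --- this is precisely the cancellation hidden in the Stratonovich formalism --- into a remainder that, using the divergence identity for each noise direction and the regularity and summability of the local characteristic $\tilde a$, is bounded by $C\,\En(\ue)$. This would give the It\^o energy identity
\begin{align}
\dft\,\En(\ue)\,=\,-\frac1\eps\int_U w_\eps^2\,dx\,dt+R_t\,dt+\dft M_t,\qquad |R_t|\le C\,\En(\ue) ,
\end{align}
with $M$ a continuous local martingale whose quadratic variation is dominated by $C\int_0^t\En(\ue)^2\,ds$. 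Dropping the nonpositive dissipation term and applying Gronwall's lemma pathwise, then raising to the $p$-th power, taking the supremum in time, invoking the Burkholder--Davis--Gundy inequality for $M$ and finally expectations, yields \eqref{eq:energy2_beta}; the initial energy is bounded through \eqref{eq:ass-init}. Keeping instead the dissipation term and integrating over $[0,T]$ expresses $\tfrac1\eps\int_0^T\!\int_U w_\eps^2$ through $\En(\ue^0)$, $\En(\ue(T,\cdot))$, $\int_0^T R_t\,dt$ and $M_T$; taking $p$-th moments and using \eqref{eq:energy2_beta} together with Burkholder--Davis--Gundy then gives \eqref{eq:energy_decay1_beta}.

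For the tightness statement (3) I would separate the spatial and temporal mechanisms. Spatially, the uniform energy bound \eqref{eq:energy2_beta} controls $\int_U|\nabla\Phi(\ue)|$ with $\Phi'=\sqrt{2F}$, so by the Modica--Mortola compactness theory \cite{Mo87,MoMo77} the laws of the time slices $\ue(t,\cdot)$ are tight in $L^1(U)$ and every limit lies in $\BV(U,\{\pm1\})$; moreover \eqref{eq:energy_decay1_beta} forces $F(\ue)\to0$, so any limit takes values in $\{\pm1\}$. Temporally, I would estimate the increments $\ue(t,\cdot)-\ue(s,\cdot)$ from \eqref{eq:sac2}: the martingale and transport contributions are controlled by Burkholder--Davis--Gundy and by the moment bounds \eqref{eq:momboun} for the flow (transport acts by composition with diffeomorphisms and is well behaved on $L^1$), while the drift contribution is governed by $\tfrac1\eps\int w_\eps^2$ via \eqref{eq:energy_decay1_beta}. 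Combining a uniform-in-time spatial compactness bound with the temporal modulus through an Aubin--Lions--Simon type criterion gives tightness on $C([0,T],L^1(U))$; Skorokhod's representation then yields the almost sure convergence of a subsequence, and lower semicontinuity of the $\BV$-norm under $L^1$-convergence combined with \eqref{eq:energy2_beta} and Fatou's lemma gives the stated moment bound for $\|u(t,\cdot)\|_{\BV(U)}$.

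The main obstacle is the It\^o energy identity of the second paragraph: making rigorous the cancellation of the ``bad'' quadratic-variation terms against the Stratonovich correction, uniformly in $\eps$ and for the infinite-dimensional noise described by $\tilde a$, is the heart of the argument. A secondary difficulty is the temporal estimate in the tightness step: all $L^2$-based quantities degenerate like $\eps^{-1/2}$ as $\eps\to0$, so the time-equicontinuity must be obtained in a sufficiently weak topology and matched to the spatial $L^1$-compactness, which forces one to exploit the concentration of $w_\eps$ near the diffuse interface rather than crude Cauchy--Schwarz bounds.
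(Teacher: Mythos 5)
Your parts (1) and (2) follow essentially the paper's route: the It\^o energy identity with the cancellation of the second-order It\^o corrections against the terms hidden in the Stratonovich form (this is Proposition \ref{prop:loc_energy} and Corollary \ref{cor:energy}), then Gronwall and Burkholder--Davis--Gundy. You correctly identify the cancellation as the heart of the matter but leave it as an expectation rather than carrying it out; in the paper it occupies the bulk of Proposition \ref{prop:loc_energy}, where the terms $T_1,\dots,T_7$ are shown, after repeated integration by parts and using the symmetry of $\tilde a$, to collapse into remainders controlled by $\En(\ue)$. Your extraction of the sup-in-time bound (pathwise Gronwall plus BDG) differs in detail from the paper's supermartingale/Doob argument but is a standard, workable alternative.

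The genuine gap is in part (3), in the temporal equicontinuity. You propose to estimate the increments $\ue(t,\cdot)-\ue(s,\cdot)$ directly from \eqref{eq:sac2}, with the drift contribution ``governed by $\tfrac1\eps\int w_\eps^2$''. This does not close uniformly in $\eps$: the drift is $-\tfrac1\eps w_\eps$, and the available bound \eqref{eq:energy_decay1_beta} gives $\|w_\eps\|_{L^2(U_T)}\lesssim\eps^{1/2}$, so pairing $\tfrac1\eps w_\eps$ against a test function by Cauchy--Schwarz produces a factor $\eps^{-1/2}$ which blows up. You flag exactly this degeneration as a ``secondary difficulty'' at the end, but you do not supply the device that resolves it, and without it the Aubin--Lions/Kolmogorov step fails. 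The paper's resolution is to run the \emph{entire} tightness argument (both compact containment and the increment estimate) for $G(\ue)$ with $G'=\sqrt{2F}$, not for $\ue$: testing $G(\ue)$ against $\varphi$ makes the drift contribution $\int\varphi\sqrt{2F(\ue)}\,\tfrac1\eps w_\eps$, which Cauchy--Schwarz splits as $\bigl(\int\tfrac1\eps F(\ue)\bigr)^{1/2}\bigl(\int\tfrac1\eps w_\eps^2\bigr)^{1/2}$ with both factors uniformly bounded --- the weight $\sqrt{2F(\ue)}$ absorbs exactly the missing $\eps^{-1/2}$. One also needs the observation that the $G''$ It\^o correction cancels the $A:D^2\ue$ term after integration by parts (so no uncontrolled second derivatives survive), and finally the growth condition in \eqref{assumptions}, which makes $G^{-1}$ uniformly continuous and hence $v\mapsto G^{-1}\circ v$ continuous on $L^1(U)$, to transfer tightness from $G(\ue)$ back to $\ue$ (Lemma \ref{lem:tight}). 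Your proposal uses $\Phi=G$ only for the spatial compactness, so this transfer mechanism --- which is what makes the time estimate compatible with the spatial $L^1$ topology --- is the missing idea.
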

In most of the sequel we will use the It\^{o}-form
of \eqref{eq:sac2}, which is by \cite[Section 6.2]{Kun97} given as
\begin{align}
\ue(t,x)\,=\, \ue(0,x)+\int_0^t \Big(\Delta \ue(s,x)-\frac{1}{\eps^2}F'(\ue(s,x)) \Big)\, ds +\int_0^t \nabla \ue(s,x) \cdot X(ds, x) \notag\\
 + \frac{1}{2} \int_0^t \Big(A(s,x):D^2 \ue(s,x) + c(s,x)\cdot \nabla \ue(s,x)\Big)\,ds   \label{eq:sac3}.
 \end{align}
 Here and below we use the notation $A :B =\sum_{i,j} A_{ij} B_{ij}$ for the Hilbert-Schmidt scalar product of two matrices. 
 
The It\^{o}-Stratonovich correction terms in \eqref{eq:sac3} are given by the matrix $A=(a_{ij})_{i,j=1,...,n}$ and the vector field $c=(c_i)_{i=1,...,n}$, 
\begin{align}
  a_{ij}(t,x)\,&=\, \tilde{a}_{ij}(t,x,x), \label{eq:def-tilde-a}\\
  c_j(t,x)\,&=\,\partial_{y_i} \tilde{a}_{ij}(t,x,y)|_{y=x}, \label{eq:def-C}
\end{align}
where we sum here and in the following over repeated indices.
Note that the extra term
$A:D^2 u$ is of highest order, such  that it changes the diffusion
coefficient in \eqref{eq:sac2}. In particular, the
stochastic-parabolicity condition (see e.g.
\cite[condition (1)]{Fl96}), which is often needed in the case of
gradient dependend noise, is always satisfied.
\section{Existence and Uniqueness}
\label{sec:ExUn}
In this section we prove Theorem \ref{thm:existence} by reducing the
existence of solutions of \eqref{eq:sac} to  
an existence statement for a deterministic reaction-diffusion equation with random coefficients. This 
technique is borrowed from \cite{Kun97}. 
\begin{proof}[Proof of Theorem \ref{thm:existence}]
As above denote by $\varphi_{s,t}$ the stochastic flow generated by $-X$.
For a function $u:U \to \R$ define the  
transformation $w_t(u)(x)= u(\varphi_{0,t}^{-1}(x))$. By the regularity of
the stochastic flow it is clear that  
$w_t$ maps $C^{3,\beta}(U)$ into itself. Denote by $\mathcal{L}$ the nonlinear operator
\[
 \mathcal{L}(u)= \Delta u-\frac{1}{\varepsilon^2} F'(u)
\]
and by $\mathcal{L}^w_t$ the operator $w_t^{-1}  \mathcal{L} w_t$. Then
a direct computation shows that $\mathcal{L}^w_t$ is given by
\begin{gather}
 \mathcal{L}^w_t u(t,x)= \sum_{i,j=1}^n
 R_w^{i,j}(t,x)\frac{\partial^2}{\partial x^i  \partial x^j} u(t,x) +
 \sum_{i=1}^n S_w^i(t,x)\frac{\partial}{\partial x^i} u(t,x)- \frac{1}{\varepsilon^2} F'(u(t,x)),
 \end{gather}
with coefficients  
\begin{align}
R^{i,j}_w (t,x) \, &=\, \sum_{k,l} \partial_k  \Big( \varphi_{0,t}^{-1}\Big)^i(\varphi_{0,t}(x)) \partial_l  \Big( \varphi_{0,t}^{-1}\Big)^j(\varphi_{0,t}(x))\\
S^i_w(t,x)  \,&=\, \sum_{k,l}  \partial_k \partial_l \Big( \varphi_{0,t}^{-1} \Big)^i(\varphi_{0,t}(x))
\end{align}
In particular, the coefficients are random, the $R^{i,j}_w$ are of class $C^{3,\beta}$ 
in space and continuous $C^{\gamma}$ in time for every $\gamma < \frac{1}{2}$, 
and the $S^i$ are of class $C^{2,\beta}$ in space and $C^\gamma$ in time. Furthermore, 
note that $R^{i,j}=\delta^{i,j}$ and $S=0$ close to the boundary.
Similar to Lemma 6.2.3 in \cite{Kun97} it can be seen by another straightforward
computation that a smooth semimartingale $u$ is a solution  
to \eqref{eq:sac2} if and only if $u'=w_t^{-1} u$ is a solution to 
\begin{align}
\frac{\partial}{\partial t} v(t,x)= \mathcal{L}^w_t v(t,x) \label{eq:modsac}.
\end{align}
Existence and uniqueness of smooth solutions to reaction diffusion
equation like  
\eqref{eq:modsac} can be derived in a standard way: For example,
apply Schaefer's Fixed Point Theorem \cite[Theorem 9.2.4]{Evan98}
in combination with Schauder-estimates
\cite[Theorem IV.5.3]{LaSU68} for the linear part of \eqref{eq:modsac} and
a-priori estimates by the maximum principle. This yields existence of
solutions that are $C^{2,\beta}$ in space and $C^{1,\beta/2}$ in
time. Differentiation with respect to space and another application of
\cite[Theorem IV.5.3]{LaSU68} proves $C^{4,\beta}$-regularity in space.
To derive \eqref{eq:momboun} note that by \eqref{eq:modsac} the derivatives
of $u$ up to order 3 can be bounded in terms of the derivatives of $v$ and $\varphi^{-1}$.
The bounds on $v$ follow from the Schauder-estimates \cite[Theorem IV.5.3]{LaSU68} applied
to the random coefficients $R^{i,j}$ and $S^j$. The bounds on these coefficients as well
as on the derivatives of $\varphi$ follow from \cite[Theorem 6.1.10]{Kun97}.
\end{proof}
\section{Tightness}
\label{sec:Tightness}

In this section we derive estimates for the diffuse surface
area.

\begin{proposition}\label{prop:d-energy-calc}
Let $u_\eps$ satisfy \eqref{eq:sac}. Then for all $0\leq t_0<t_1$ and all $\eps>0$
\begin{align}
  & E_\eps(u_\eps(t_1)) - E_\eps(u_\eps(t_0))\notag\\
  = &-\int_{t_0}^{t_1}\int_U \frac{1}{\eps} w_\eps(t,x)^2 \,dx\,dt + \int_U \int_{t_0}^{t_1} w_\eps(t,x) \nabla u_\eps(t,x) \cdot X(dt,x) \,dx \notag\\
  &- \frac{1}{2} \int_{t_0}^{t_1} \int_U  w_\eps(t,x)\, c(t,x) \cdot \nabla u_\eps(t,x)  \,dx\,dt \notag\\
  &+ \frac12 
  \int_{t_0}^{t_1} \int_U {\eps} \,\nabla u_\eps(t,x)\cdot \Psi(t,x)\nabla\ue(t,x)+\psi(t,x)\frac{1}{\eps} F(\ue(t,x))dx dt\, \label{eq:d-energy}
\end{align}
holds with
\begin{align}
  \Psi_{ij}\,&=\, \Big( \partial_k \Big[  \partial_{x_l} \tilde{a}_{lk}(t;x,y)\big|_{x=y} -  \partial_{y_l} \tilde{a}_{lk}(t;x,y)\big|_{x=y} \Big] \, \,\Big) \delta_{ij}      \notag\\    
  &  +  \, \partial_{x_k} \partial_{y_k}  \tilde{a}_{ij}(t;x,y)\big|_{x=y}      + 2 \,\partial_j \big[ \partial_{x_k} \tilde{a}_{ik}(t;x,y)\big|_{x=y}  \big]                - 2 \,\partial_k \big[ \partial_{x_j} \tilde{a}_{ik}(t;x,y)\big|_{x=y} \big]  ,\notag \\
  \psi\,&=\,  \Big( \partial_j \Big[  \partial_{x_i} \tilde{a}_{ij}(t;x,y)\big|_{x=y} -  \partial_{y_i} \tilde{a}_{ij}(t;x,y)\big|_{x=y} \Big] \, \,\Big). \label{eq:def-Psi}
\end{align}
\end{proposition}
\begin{proof}
Recalling the conventions \eqref{eq:def-w_eps},  \eqref{eq:def-tilde-a}, and \eqref{eq:def-C} we can rewrite \eqref{eq:sac3} as 
\begin{align}
d\ue = -\frac{1}{\eps} w_\eps \, dt + \nabla \ue \cdot X(dt) + \frac12 A \colon D^2 \ue \, dt+ \frac12 c \cdot \nabla \ue \, dt \label{eq:new-Lemma1}.
\end{align}
It\^o's formula then yields 
\begin{align}
dE_\eps(\ue) =& \int_U w_\eps(t,x) \, d\ue(t,x) \, dx \notag\\
&+ \frac{1}{2} \Big[ \int_U \eps \tr(Q)(t,x,x) \, dx+ \int_U\frac{1}{\eps} F''\big(\ue(t,x)\big) \,  q(t,x,x) \, dx \Big] \,dt. \label{eq:new-Lemma2}
\end{align}
Here in the It\^o correction term we have used the short hand notation
\begin{align}
Q_{kl} (t,x,y)= & \partial_{x_k} \partial_{y_l}  \Big[ \partial_i \ue(t,x)\, \tilde{a}_{ij}(t;x,y) \, \partial_j \ue(t,y) \Big] \label{eq:new-Lemma3}\\
q(t,x,y) =& \partial_i \ue(t,x) \,\tilde{ a}_{ij}(t,x,y) \,  \partial_j \ue(t,y). \label{eq:new-Lemma4}
\end{align}
See \cite[Theorem 3.1.3]{Kun97} for a proof that the cross variation of $\nabla (\nabla \ue \cdot X )$ is indeed given by $Q$.  Recall that we use the convention to sum over repeated indices.

When evaluating the right-hand side of \eqref{eq:new-Lemma2} we obtain one `good' term, which corresponds to minus the integral over the squared diffuse mean curvature in the purely deterministic case. Additional terms are due to the stochastic drift in \eqref{eq:new-Lemma1} and the second order terms in the It\^o formula that can be found in the second line of \eqref{eq:new-Lemma2}. The objective is to show that these extra terms can finally be controlled by the `good' mean curvature term or by a Gronwall argument. Note that a priori this is not obvious at all, because on the right hand side of \eqref{eq:new-Lemma2} there appear terms like
\begin{align}
\int_U w_\eps(t,x) \,  A(t,x)\colon D^2 \ue(t,x) dx \, dt \label{eq:new-Lemma5}
\end{align}
that are not controlled by the mean curvature term.

We now start modifying the It\^o terms in the second line of \eqref{eq:new-Lemma2}. These calculations only involve the spatial integrals and we drop the time argument for simplicity. We start the calculation by treating the terms involving the potential $F$ on the right hand side of \eqref{eq:new-Lemma2}.
\begin{align}
\frac{1}{\eps}& \int_U F''\big(\ue(x) \big) \, \partial_i \ue(x)\,a_{ij}(x) \, \partial_j \ue(x) \, dx\notag\\
&= - \frac{1}{\eps} \int_U F'\big(\ue(x) \big) \,  \partial_i a_{ij}(x) \,\partial_j \ue(x) \, dx  - \frac{1}{\eps} \int_U F'\big(\ue(x) \big) \,   a_{ij}(x) \,\partial_i \partial_j \ue(x) \, dx\notag\\
&= - \frac{1}{\eps} \int_U F'\big(\ue(x) \big) \, \Big[  \partial_{x_i} \tilde{a}_{ij}(x,y)\big|_{x=y} -  \partial_{y_i} \tilde{a}_{ij}(x,y)\big|_{x=y} \Big] \,\partial_j \ue(x) \, dx \notag\\
& \qquad -  \frac{2}{\eps} \int_U F'\big(\ue(x) \big) \,    \partial_{y_i} \tilde{a}_{ij}(x,y)\big|_{x=y}  \,\partial_j \ue(x) \, dx  \notag\\
&\qquad - \frac{1}{\eps} \int_U F'\big(\ue(x) \big) \,   a_{ij}(x) \,\partial_i \partial_j \ue(x) \, dx\notag\\
&=  \frac{1}{\eps} \int_U F\big(\ue(x) \big) \, \partial_j \Big[  \partial_{x_i} \tilde{a}_{ij}(x,y)\big|_{x=y} -  \partial_{y_i} \tilde{a}_{ij}(x,y)\big|_{x=y} \Big] \, \, dx \notag\\
& \qquad -  \frac{2}{\eps} \int_U F'\big(\ue(x) \big) \,   \partial_{y_i} \tilde{a}_{ij}(x,y)\big|_{x=y}  \,\partial_j \ue(x) \, dx  \notag\\
&\qquad - \frac{1}{\eps} \int_U F'\big(\ue(x) \big) \,   a_{ij}(x) \,\partial_i \partial_j \ue(x) \, dx .\label{eq:new-Lemma6}
\end{align}
Here in the first step we have performed a partial integration with respect to the $x_i$ coordinate. In the second step we have added and subtracted an extra term. Finally, in the last line, we have performed another partial integration with respect to $x_j$ in the first term. 

In order to treat the terms involving $Q$ we start by evaluating the partial derivatives in  \eqref{eq:new-Lemma3}. We get
\begin{align}
Q_{kl} = T^1_{kl} + T^2_{kl} + T^3_{kl} +R_{kl}, \label{eq:new-Lemma7}
\end{align}
where 
\begin{align}
T^1_{kl}(x,y) = & \partial_k \partial_i \ue(x)  \, \tilde{a}_{ij}(x,y) \,  \partial_l \partial_j \ue(y),\notag\\
T^2_{kl} (x,y)= & \partial_i \ue(x) \, \partial_{x_k} \tilde{a}_{ij} (x,y) \partial_l \partial_j \ue(y)  ,\notag\\
T^3_{kl} (x,y)= &  \partial_k \partial_i \ue(x)\,  \partial_{y_l} \tilde{a}_{ij}(x,y)  \, \partial_j \ue(y) ,\notag\\
R_{kl} (x,y)= & \partial_i \ue(x) \,  \partial_{x_k} \, \partial_{y_l}  \tilde{a}_{ij}(x,y) \, \partial_j \ue(y) \label{eq:new-LemmaA}.
\end{align}
We have chosen to denote the last term with a different character $R$ to indicate that it will remain unchanged throughout the rest  of the calculation.

We start our calculation with the term in \eqref{eq:new-Lemma2} that involves $T^1$. By first performing first a partial integration in $x_j$ and then another partial integration in $x_k$ in the first term we get
\begin{align}
& \int_U \eps\,  \partial_k \partial_i \ue(x)  \, a_{ij}(x) \,  \partial_k \partial_j \ue(x) \, dx \notag\\
=& - \int_U \eps\,  \partial_j \partial_k \partial_i \ue(x)  \, a_{ij}(x) \,  \partial_k  \ue(x) \, dx - \int_U \eps\,  \partial_k \partial_i \ue(x)  \,\partial_j a_{ij}(x) \,  \partial_k  \ue(x) \, dx \notag\\
=&  \int_U \eps\,  \partial_j \partial_i \ue(x)  \, a_{ij}(x) \,  \Delta  \ue(x) \, dx  + \int_U \eps\,  \partial_j  \partial_i \ue(x)  \, \partial_k a_{ij}(x) \, \partial_k  \ue(x) \, dx \notag\\
& \qquad - \int_U \eps\,  \partial_k \partial_i \ue(x)  \,\partial_j a_{ij}(x) \,  \partial_k  \ue(x) \, dx .\label{eq:new-Lemma8}
\end{align}
The first term on the right hand side of \eqref{eq:new-Lemma8} has already the desired form. It cancels exactly with the corresponding part of the `bad' term mentioned above in \eqref{eq:new-Lemma5} . Let us treat the two extra terms on the right hand side of \eqref{eq:new-Lemma8} separately.  For the last term we get
\begin{align}
-& \int_U \eps\,  \partial_k \partial_i \ue(x)  \,\partial_j a_{ij}(x) \,  \partial_k  \ue(x) \, dx\notag\\
&= - \int_U \eps\,  \partial_k \partial_i \ue(x)  \,\Big[ \partial_{y_j} \tilde{a}_{ij}(x,y)\big|_{x=y}  - \partial_{x_j} \tilde{a}_{ij}(x,y)\big|_{x=y}  \Big] \,  \partial_k  \ue(x) \, dx \notag\\
&\qquad -  2\int_U \eps\,  \partial_k \partial_i \ue(x)  \,\partial_{x_j} \tilde{a}_{ij}(x,y)\big|_{x=y} \,  \partial_k  \ue(x) \, dx  \notag\\
&= \int_U \eps\,   \frac{1}{2} \big| \nabla  \ue(x) \big|^2  \partial_i \,\big[ \partial_{y_j} \tilde{a}_{ij}(x,y)\big|_{x=y}  - \partial_{x_j} \tilde{a}_{ij}(x,y)\big|_{x=y}  \big] \,  \, dx \notag\\
&\qquad +  2\int_U \eps\,   \partial_i \ue(x)  \,\partial_{x_j} \tilde{a}_{ij}(x,y)\big|_{x=y} \,  \Delta  \ue(x) \, dx\notag\\
&\qquad  +   2\int_U \eps\,  \partial_i \ue(x)  \, \partial_k \big[ \partial_{x_j} \tilde{a}_{ij}(x,y)\big|_{x=y}  \big]\,  \partial_k  \ue(x) \, dx . \label{eq:new-Lemma9}
\end{align}
Here in the second equality for the first term we have used the fact that $\partial_k \partial_i \ue \, \partial_k \ue = \frac12 \partial_i | \nabla \ue|^2$ to perform an integration by part. For the second term we have performed another integration by part in $x_k$.

For the second term on the right hand side of \eqref{eq:new-Lemma8} we add the terms involving $T^3$ and $T^4$ to obtain
\begin{align}
&\int_U \eps\,  \partial_j  \partial_i \ue(x)  \, \partial_k a_{ij}(x) \, \partial_k  \ue(x) \, dx + \int \eps \, \big( T^2_{k,k}(x,x) +  T^3_{k,k}(x,x) \big)  \, dx \notag\\  
&= \int_U \eps\,  \partial_j  \partial_i \ue(x)  \, \partial_{x_k} \tilde{a}_{ij}(x,y)\big|_{x=y} \, \partial_k  \ue(x) \, dx 
+ \int_U \eps  \, \partial_i \ue(x) \, \partial_{x_k} \tilde{a}_{ij} (x,y)\big|_{x=y} \partial_k \partial_j \ue(x) \, dx \notag\\
&+ \int_U \eps\,  \partial_j  \partial_i \ue(x)  \, \partial_{y_k} \tilde{a}_{ij}(x,y)\big|_{x=y} \, \partial_k  \ue(x) \, dx+  \int_U \partial_k \partial_i \ue(x)\,  \partial_{y_k} \tilde{a}_{ij}(x,y) \big|_{x=y} \, \partial_j \ue(x)  \, dx =\notag\\
&= -\int_U \eps\,   \partial_i \ue(x)  \, \partial_j \big[  \partial_{x_k} \tilde{a}_{ij}(x,y)\big|_{x=y} \big] \, \partial_k  \ue(x) \, dx\notag\\
& \qquad \qquad  -  \int_U \eps\,  \partial_j   \ue(x)  \,\partial_i \big[ \partial_{y_k} \tilde{a}_{ij}(x,y) \big|_{x=y} \big] \, \partial_k  \ue(x) \notag\\
&= - 2 \int_U \eps\,   \partial_i \ue(x)  \, \partial_j \big[ \partial_{x_k} \tilde{a}_{ij}(x,y)\big|_{x=y} \big] \, \partial_k  \ue(x) \, dx . \label{eq:new-Lemma10}
\end{align} 
Here for the second equality we have performed an integration by part in $x_j$ in the second line and another integration by part in $x_i$ in the third line. In the last equality we have used the identity $\tilde{a}_{ij}(x,y) = \tilde{a}_{ji}(y,x)$ which follows immediately from the definition \eqref{eq:quadrvar}.

So finally, summarising the calculations, we get by collecting and regrouping the terms from \eqref{eq:new-Lemma2}, \eqref{eq:new-Lemma6}, \eqref{eq:new-LemmaA}, \eqref{eq:new-Lemma8}, \eqref{eq:new-Lemma9}, and \eqref{eq:new-Lemma10} 
\begin{align}
dE_\eps(\ue(t)) &= \int_U w_\eps(t,x) \,  \Big[ -\frac{1}{\eps} w_\eps(t,x)  \, dt + \nabla \ue(t,x) \cdot X(dt) \notag\\ 
&\qquad \qquad + \frac12 A(t,x) \colon D^2 \ue(t,x) \, dt+ \frac12 c(t,x) \cdot \nabla \ue(t,x) \, dt    \Big] \,dx \notag\\
& - \frac{1}{2}\int_U  \frac{1}{\eps}  \big( F'\big(\ue(t,x) \big) -  \Delta  \ue(t,x) \big) \,   a_{ij}(t,x) \,\partial_i \partial_j \ue(t,x) \, dx \, dt\notag\\
&+  \frac{1}{2} \int_U \frac{1}{\eps} F\big(\ue(t,x) \big) \, \partial_j \Big[  \partial_{x_i} \tilde{a}_{ij}(t;x,y)\big|_{x=y} -  \partial_{y_i} \tilde{a}_{ij}(t;x,y)\big|_{x=y} \Big] \, \, dx\, dt \notag\\
&+  \frac12  \int_U \eps\,   \frac{1}{2} \big| \nabla  \ue(t,x) \big|^2  \partial_i \,\big[ \partial_{y_j} \tilde{a}_{ij}(t;x,y)\big|_{x=y}  - \partial_{x_j} \tilde{a}_{ij}(t;x,y)\big|_{x=y}  \big] \,  \, dx  \, dt\notag\\
&  -  \frac{1}{\eps} \int_U F'\big(\ue(t,x) \big) \,    \partial_{y_i} \tilde{a}_{ij}(t;x,y)\big|_{x=y}  \,\partial_j \ue(t,x) \, dx \, dt  \notag\\
& +  \int_U \eps\,   \partial_i \ue(t,x)  \, \partial_{x_j} \tilde{a}_{ij}(t;x,y)\big|_{x=y} \,  \Delta  \ue(t,x) \, dx\notag\\
& + \frac12 \int_U \eps \,  \partial_i \ue(t,x) \,  \partial_{x_k} \partial_{y_k}  \tilde{a}_{ij}(t;x,y)\big|_{x=y} \, \partial_j \ue(t,x) \, dx \, dt\notag\\
&  +   \int_U \eps\,  \partial_i \ue(t,x)  \, \partial_k \big[ \partial_{x_j} \tilde{a}_{ij}(t;x,y)\big|_{x=y}  \big]\,  \partial_k  \ue(t,x) \, dx \, dt \notag\\
&-  \int_U \eps\,   \partial_i \ue(t,x)  \, \partial_j \big[ \partial_{x_k} \tilde{a}_{ij}(t;x,y)\big|_{x=y} \big] \, \partial_k  \ue(t,x) \, dx \, dt .\label{eq:new-Lemma11}
\end{align}
As mentioned above  the terms in the third line of \eqref{eq:new-Lemma11} cancel with the first term in the second line. Also note that using the symmetry $\tilde{a}_{ij}(x,y) = \tilde{a}_{ji}(y,x)$ once more we can see that the terms in square brackets in the fourth and fifth lines are identical, so that these two terms can be written as 
\begin{align}
\mu_\eps^t \Big( \partial_j \Big[  \partial_{x_i} \tilde{a}_{ij}(t;x,y)\big|_{x=y} -  \partial_{y_i} \tilde{a}_{ij}(t;x,y)\big|_{x=y} \Big] \, \,\Big) \label{eq:new-Lemma12}.
\end{align}
Finally, using the symmetry of $\tilde{a}$ once more and recalling the definition \eqref{eq:def-C}, we see that $\partial_{x_j}\tilde{a}_{ij} = \partial_{y_i}\tilde{a}_{ij} = c_j$.  Hence, the terms in the sixth and seventh lines yield $-2$ times the last term in the second line and the sign in  this It\^o-Stratonovich correction term changes. 
Putting everything together we get the identity
\begin{align}
  E_\eps(u_\eps(t_1))  -& E_\eps(u_\eps(t_0))   \notag\\
  &= - \int_{t_0}^{t_1} \int_U\frac{1}{\eps} w_\eps(t,x)^2  \,dx  \, dt +  \int_U\int_{t_0}^{t_1}  w_\eps(t,x) \, \nabla \ue(t,x) \cdot X(dt,x) \,  dx\notag\\ 
&- \frac12  \int_{t_0}^{t_1} \int_U c(t,x) \cdot \nabla \ue(t,x) w_\eps(t,x)\, dx \, dt   \notag\\
&+ \frac 12 \int_{t_0}^{t_1} \mu_\eps^t \Big( \partial_j \Big[  \partial_{x_i} \tilde{a}_{ij}(t;x,y)\big|_{x=y} -  \partial_{y_i} \tilde{a}_{ij}(t;x,y)\big|_{x=y} \Big] \, \,\Big) \, dt\notag\\
& + \frac12 \int_{t_0}^{t_1} \int_U  \eps \,  \partial_i \ue(t,x) \,  \partial_{x_k} \partial_{y_k}  \tilde{a}_{ij}(t;x,y)\big|_{x=y} \, \partial_j \ue(t,x) \, dx \, dt\notag\\
 &  +   \int_U \eps\,  \partial_i \ue(t,x)  \, \partial_k \big[ \partial_{x_j} \tilde{a}_{ij}(t;x,y)\big|_{x=y}  \big]\,  \partial_k  \ue(t,x) \, dx \, dt \notag\\
&-  \int_U \eps\,   \partial_i \ue(t,x)  \, \partial_j \big[ \partial_{x_k} \tilde{a}_{ij}(t;x,y)\big|_{x=y} \big] \, \partial_k  \ue(t,x) \, dx \, dt .\label{eq:d-energy1}
\end{align}
This equality is equivalent to \eqref{eq:d-energy}.
\end{proof}
\begin{remark} 
In the special case that the Brownian vector field $X(\cdot,\circ dt)$ is given by \eqref{eq_SDEBM}, the equation \eqref{eq:d-energy1} yields
\begin{align}
  &E_\eps(u_\eps(t_1)) - E_\eps(u_\eps(t_0))   \notag\\
  =\,& - \int_{t_0}^{t_1} \int_U\frac{1}{\eps} w_\eps^2  \,dx  \, dt +  \int_U\int_{t_0}^{t_1}  w_\eps \, \nabla \ue \cdot X^k\,dB_k(t) \,  dx\notag\\
  &+  \int_{t_0}^{t_1} \int_U  w_\eps \, \nabla \ue \cdot X^0\,dx \,  dt\notag\\ 
\,&- \frac12  \int_{t_0}^{t_1} \int_U DX^kX^k \cdot \nabla \ue w_\eps\, dx \, dt   \notag\\
\,&+ \frac 12 \int_{t_0}^{t_1} \mu_\eps^t \Big( (\nabla\cdot X^k)^2 - \tr (DX^k DX^k) \,\Big) \, dt\notag\\
\,& + \int_{t_0}^{t_1} \int_U   \frac{\eps}{2}  |DX^T\nabla\ue|^2 + \eps\nabla\ue\cdot DX^kDX^k\nabla\ue \, dx \, dt\notag\\
\,& -  \int_{t_0}^{t_1} \int_U \eps\,  (\nabla\ue\cdot DX^k\nabla\ue)\nabla\cdot X^k \, dx \, dt. \label{eq:energy-spca}
\end{align}
We next remark that the first \emph{inner} variation of $E_\eps$ at $u_\eps$ in direction of a vector field $Y$ is given by
\begin{gather*}
	\delta E_\eps(u_\eps)(Y) \,=\, \int_U -w_\eps \nabla \ue\cdot Y\,dx,
\end{gather*}
and that we obtain from \cite{Le10} for the second {inner} variation of $E_\eps$ at $u_\eps$ in direction of a vector field $Y$
\begin{align*}
	\delta^2 E_\eps(u_\eps)(Y,Y) \,=\, & \Big(\frac{\eps}{2}|\nabla\ue|^2 + \frac{1}{\eps}F'(\ue)\Big) \Big( (\nabla\cdot Y)^2 - \tr (DY DY) \Big) +\\*
	 &+   \frac{\eps}{2}  |DX^T\nabla\ue|^2 + \eps\nabla\ue\cdot DYDY\nabla\ue -    (\nabla\ue\cdot DY\nabla\ue)\nabla \cdot Y. 
\end{align*}
Therefore, we can rewrite \eqref{eq:energy-spca} as 
\begin{align*}
  E_\eps(u_\eps(t_1)) - E_\eps(u_\eps(t_0))   
  =\,& - \int_{t_0}^{t_1} \int_U\frac{1}{\eps} w_\eps^2  \,dx  \, dt\notag\\
  &+  \int_{t_0}^{t_1}\delta E_\eps(u_\eps(t))\big(-X^k \big) \circ dB_k(t)  \notag\\
  &+  \int_{t_0}^{t_1}\delta E_\eps(u_\eps(t)) (-X^0)\, dt  \\
  & +\frac 12 \int_{t_0}^{t_1}  \delta^2 E_\eps(u_\eps(t))(X^k,X^k)\,dt,
\end{align*}
which reflects the fact that the perturbation $\nabla\ue\cdot X^k 
\circ dB_k$ to the Allen-Cahn equation in \eqref{eq:sac} corresponds to an inner variation of $\ue$ in direction of  $-X^k \circ dB_k(t)$.
\end{remark}
With Proposition \ref{prop:d-energy-calc} in hand we are now ready to derive the desired moment estimates \eqref{eq:energy2_beta} and \eqref{eq:energy_decay1_beta}. We give the proof only for $p \geq 2$. The general case then follows from Young's inequality.
\begin{proposition}
For every $p \geq 2$ and all $T>0$ we have
\begin{align}
\sup_{\eps > 0} \Ex \Big[ \sup_{0 \leq t \leq T} \En(\ue(t))^p \Big] &< \infty \label{eq:energy2} \qquad \text{and} \\
\sup_{\eps > 0} \Ex \bigg[ \Big( \int_0^T \int_U \frac{1}{\eps} w_\eps ^2(t,x) \,dx dt \Big)^{p} \, \bigg] &< \infty. \label{eq:energy_decay1}
\end{align}
\end{proposition}
\begin{proof}
We start by observing that due to the positivity of both summands we can write 
\begin{align} 
 \Ex \Big[ \sup_{0\leq s \leq t } &E_\eps(u_\eps(s))^p \Big] + \Ex \bigg[\bigg( \int_0^t  \int_U \frac{1}{\eps} w_\eps ^2(s,x) \,dx ds  \bigg)^p \bigg] \notag\\
 &\leq 2^{p-1} \, \Ex \bigg[ \bigg(\sup_{0\leq s \leq t } E_\eps(u_\eps(s)) +  \int_0^t  \int_U \frac{1}{\eps} w_\eps ^2(s,x) \,dx ds  \bigg)^p \bigg] .
\label{eq:new8}
\end{align}
According to \eqref{eq:d-energy} we can write for any $t >0$
\begin{equation}\label{eq:new1}
 E_\eps(u_\eps(t)) + \int_0^t  \int_U \frac{1}{\eps} w_\eps ^2(t,x) \,dx dt = \En\big( u_\eps^0 \big) + M_\eps(t) +H_\eps(t),
\end{equation}
where
\begin{align}\label{eq:new2}
M_\eps(t) =  \int_U \int_{0}^{t} w_\eps(t,x) \nabla u_\eps(t,x) \cdot \tilde{X}(dt,x) \,dx ,
\end{align}
and 
\begin{align}
H_\eps(t) =&  \int_{0}^{t} \int_U  w_\eps(s,x)\, \tilde{b}(s,x) \cdot \nabla u_\eps(s,x)  \,dx\,ds  \label{eq:new3}\\
  &+ \frac12 
  \int_{0}^{t_1} \int_U {\eps} \,\nabla u_\eps(s,x)\cdot \Psi(s,x)\nabla\ue(s,x)+\psi(t,x)\frac{1}{\eps} F(\ue(s,x)) \,dx \, ds \,.\notag
\end{align}
Here we have used the notation
\begin{equation}\label{eq:tildeX}
\tilde{X}(t,x) = X(t,x) -\int_0^t b(s,t) \, ds\quad \text{and}   \quad \tilde{b}(t,x) = b(t,x) - \frac12 c(t,x).
\end{equation}
Then $M_\eps$ is a local martingale and $H_\eps$ is a process of bounded variation. The quadratic variation of $M_\eps$ is given by
\begin{gather}
  \dft \langle M_\eps \rangle_t \, = \, \int_U \int_U w_\eps(t,x) \nabla \ue(t,x) \tilde{A}(t,x,y) w_\eps(t,y) \nabla \ue(t,y) dx \, dy \, dt  .
\end{gather}

Then recalling \eqref{eq:ass-init} we can write for any $\eps>0$ and any $p\geq 1$ and any $t>0$ that 
\begin{align}
\Ex \bigg[ \bigg(\sup_{0\leq s \leq t } &E_\eps(u_\eps(s)) + \int_0^t  \int_U \frac{1}{\eps} w_\eps ^2(s,x) \,dx ds  \bigg)^p \bigg] \notag\\
& \leq 3^{p-1} \Lambda^p + 3^{p-1} \, \Ex \Big[ \sup_{0 \leq s \leq t} \big| M_\eps(s) \big|^p\Big] + 3^{p-1} \,\Ex \Big[ \sup_{0 \leq s \leq t} \big| H_\eps(s) \big|^p\Big].  \label{eq:new4}
\end{align}
For the martingale term we get using the Burkholder-Davis-Gundy inequality \cite[Theorem 3.28  on page 166]{KS91} that
\begin{align}\label{eq:new5}
\Ex \Big[ \sup_{0 \leq s \leq t} \big| M_\eps(s) \big|^p\Big] \leq C \, \Ex \Big[  \langle M_\eps \rangle_t^{\frac{p}{2}} \Big],
\end{align}
for a constant $C$ depending only on $p$. In order to bound this expression we use
\begin{align}
w_\eps \nabla u_\eps \,= \,& - \eps \Delta u_\eps \nabla u_\eps + \frac{1}{\eps} F'(u_\eps) \nabla u_\eps  =\notag\\
=\,& - \nabla \cdot \big(\eps \nabla u_\eps \otimes \nabla u_\eps  \big) + \nabla \Big( \frac{\eps}{2} \big| \nabla u_\eps \big|^2 + \frac{1}{\eps} F(u_\eps) \Big) \label{eq:prop_id14};
\end{align}
a partial integration yields
\begin{align}
&\int_U \tilde{A}(s,x,y) w_\eps(s,y) \nabla \ue(s,y) \, dy \,\notag\\
=\,& \int_U \nabla_y \tilde{A}(s,x,y) \big(  \eps \nabla \ue(s,y) \otimes \nabla \ue(s,y) \big)\, dy \notag\\
 &- \int_U \nabla_y \tilde{A}(s,x,y) \Big( \frac{\eps}{2} \big| \nabla \ue(s,y) \big|^2 + \frac{1}{\eps} F(\ue(s,y)) \Big)\,dy. \label{eq:prop_id15}
\end{align}
Thus repeating the same partial integration in the $x$-variable we can conclude that
\begin{align}
  \Big|\int_U & \int_U w_\eps(s,x) \nabla \ue(s,x) \cdot \tilde{A}(s,x,y) w_\eps(s,y) \nabla \ue(s,y) dx \, dy  \bigg| \notag\\
  \leq\,& 4 \|\tilde{A} \|_{C^0([0,T];C^2(\bar{U}\times \bar{U}))} \En(\ue(s))^2. \label{eq:prop_id16}
\end{align}
Hence for any $t \leq T$  the right hand side of  \eqref{eq:new5} can be bounded by
\begin{align}
 \Ex \Big[  \langle M_\eps \rangle_t^{\frac{p}{2}} \Big]    \leq& 4 \|\tilde{A} \|_{C^0([0,T];C^2(\bar{U}\times \bar{U}))} \, \Ex \bigg[  \Big( \int_0^t \En(\ue(s))^2  ds  \Big)^{\frac{p}{2}} \bigg] \notag\\
 \leq & 4  \|\tilde{A} \|_{C^0([0,T];C^2(\bar{U}\times \bar{U}))} T^{\frac{p-2}{2}} \,  \int_0^t \Ex \big[  \En(\ue(s))^p  \big]  \,ds. 
 \end{align}
Here in the second line we have used  H\"older inequality and Fubini theorem. For the term $H_\eps$ we write
\begin{align}
\Ex \Big[ & \sup_{0 \leq s \leq t} | H_\eps(s)|^p \Big] \label{eq:new6}\\
\leq&  2^{p-1}  \,\Ex\Big[ \Big| \int_0^t \int_U  w_\eps(r,x) \nabla u_\eps(r,x)\cdot \tilde{b}(r,x)  \,dx \, dr \Big|^p  \Big] \notag\\
  &+ 2^{p-1} \,
  \Ex\Big[ \Big| \int_0^t  \int_U \frac{\eps}{2} \nabla u_\eps(r,x)\cdot \Psi(r,x)\nabla\ue(r,x)+\psi(r,x)\frac{1}{2\eps} F(\ue(r,x))\,dx \, dr \Big|^p \Big]. \notag
\end{align}
Using Young's inequality for any $\delta>0$ the first term on the right hand side of \eqref{eq:new6} can be estimated by 
\begin{align}
  \Ex & \bigg[\Big|\int_0^t \int_U  w_\eps \nabla
	u_\eps \cdot \tilde{b}\,dx \, ds \Big|^{p} \bigg] \notag\\
	&\leq\, \| \tilde{b} \|_{C^0([0,T] \times U)}^p  \,  \Ex \bigg[\Big|\int_0^t \int_U \frac{\delta}{2\eps} w_\eps^2 + \frac{\eps}{2\delta} |\nabla \ue|^2 \, dx \, ds \Big|^{p} \bigg] \notag\\
	&\leq\, \delta^p \frac12 \| \tilde{b} \|_{C^0([0,T] \times U)}^p  \Ex \bigg[ \Big|\int_0^t \int_U \frac{1}{\eps} w_\eps^2 \, dx \, ds \Big|^{p} \bigg]  \notag\\
	&\qquad + \frac{1}{\delta^p}   2^{p-1} \| \tilde{b} \|_{C^0([0,T] \times U)}^p T^{{p-1}} \int_0^t \Ex \bigg[ \Big(\int_U \frac{\eps}{2} |\nabla \ue|^2 \, dx \, \Big)^{p} \bigg] ds. \label{eq:energy_decay4.1}
\end{align}
For the second term on the right hand side of \eqref{eq:new6} we write 
\begin{align}
\Ex\Big[ \Big| \int_s^t  \int_U \frac{\eps}{2} \nabla u_\eps(r,x)\cdot \Psi(r,x)\nabla\ue(r,x)+\psi(r,x)\frac{1}{2\eps} F(\ue(r,x))\,dx \, dr \Big|^p \Big] \notag\\
\leq  \big( \| \psi \|_{C^0([0,T] \times U)} + \| \Psi \|_{C^0([0,T] \times U)}\big)^p T^{{p-1}} \int_0^t \Ex \big[\En(\ue(s))^p \, \big] ds.
\label{eq:new7}
\end{align}
So finally,  \eqref{eq:new8} and \eqref{eq:new4}--\eqref{eq:new7} we obtain 
\begin{align}
 \Ex \Big[ \sup_{0\leq s \leq t } &E_\eps(u_\eps(s))^p \Big] + \Ex \bigg[\bigg( \int_0^t  \int_U \frac{1}{\eps} w_\eps ^2(s,x) \,dx ds  \bigg)^p \bigg]  \label{eq:new9}\\
 & \leq C_1 \Big(1 +\frac{1}{\delta^p} \Big) \int_0^t \Ex \Big[  E_\eps(u_\eps(s))^p \Big] \, ds+ C_2 \delta^p \Ex \bigg[\bigg( \int_0^t  \int_U \frac{1}{\eps} w_\eps ^2(s,x) \,dx ds  \bigg)^p \bigg] \notag,
\end{align}
for positive constants $C_1 =C_1(p,T,\|b\|_{C^0([0,T]\times U)},\|\tilde{A}\|_{C^0([0,T];C^2(U))})$ and $C_2=C_2(p,T,  \| \tilde{b} \|_{C^0([0,T] \times U)})$. Hence choosing $\delta =\Big( \frac{1}{C_2}\Big)^{\frac{1}{p}}$  and applying Gronwall's Lemma we obtain \eqref{eq:energy2}. Then applying the same estimate for $\delta =\frac12 \Big( \frac{1}{C_2}\Big)^{\frac{1}{p}}$ and using \eqref{eq:energy2} we also obtain \eqref{eq:energy_decay1}.
\end{proof}

As in \cite{Modi87} it is convenient to consider instead of $u_\eps$ the transformation $G(u_\eps)$ given by the function $G(s):=\int_0^s \sqrt{2 F(r)} dr$. Note that $G$ is smooth, increasing and that $G(0)=0$. We will need the following bound on the increments of $ G(\ue(t))$:
\begin{lemma}
For every smooth test function $\varphi \in C_b^\infty(U)$ and every $p \in \N$ we have for all $0 \leq s < t \leq T$
\begin{gather}
\Ex \Big[ \Big| \int_U G(\ue(t,x)) \varphi(x) \, dx - \int_U G(\ue(s,x)) \varphi(x) \, dx  \Big|^{2p} \Big] \leq C |t-s|^{p}, \label{eq:increments1}
\end{gather}
where $C=C(p,\|\tilde{A}\|_{C^0([0,T]\times U\times U)},\|A\|_{C^0([0,T];C^1(U))},\|b,c\|_{C^0([0,T]\times U)},\|\varphi\|_{C^1(U)})$.
\end{lemma}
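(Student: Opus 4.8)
The plan is to apply It\^{o}'s formula to the functional $g(t):=\int_U G(\ue(t,x))\varphi(x)\,dx$, using the It\^{o} form \eqref{eq:sac3} of the equation, and then to estimate the resulting drift and martingale parts separately. Since $G'=\sqrt{2F}$ and, by \eqref{eq:q} evaluated at $y=x$, the quadratic-variation density of $\ue$ at a point equals $\nabla\ue\cdot A\nabla\ue$, It\^{o}'s formula yields
\begin{align*}
 g(t)-g(s)\,=\,&\int_s^t\!\int_U \varphi\, G'(\ue)\Big(\Delta\ue-\tfrac{1}{\eps^2}F'(\ue)+\tfrac12 A:D^2\ue+\tfrac12 c\cdot\nabla\ue\Big)\,dx\,dr\\
 &+\int_s^t\!\int_U \tfrac12\varphi\, G''(\ue)\,\nabla\ue\cdot A\nabla\ue\,dx\,dr\\
 &+\int_s^t\!\int_U \varphi\, G'(\ue)\,\nabla\ue\cdot X(dr,x)\,dx.
\end{align*}
I would split the last line into its finite-variation part (with coefficient $b$) and a genuine martingale $N$, writing $g(t)-g(s)=I_{\mathrm{drift}}+N$.

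Next I would simplify the drift. After an integration by parts in the second-order term $\tfrac12\int_U\varphi G'(\ue)A:D^2\ue$, the contribution containing $G''(\ue)\nabla\ue\cdot A\nabla\ue$ exactly cancels the It\^{o} correction in the second line; this is the Stratonovich cancellation already exploited in Proposition \ref{prop:loc_energy}. Recalling $\Delta\ue-\tfrac1{\eps^2}F'(\ue)=-\tfrac1\eps w_\eps$ from \eqref{eq:def-w_eps}, the drift reduces to a principal term $-\tfrac1\eps\int_U\varphi\sqrt{2F(\ue)}\,w_\eps\,dx$ plus a collection of terms of the form $\int_U \Theta\cdot G'(\ue)\nabla\ue\,dx$, where $\Theta$ is bounded in $C^0$ by $\|\varphi\|_{C^1}$, $\|A\|_{C^0([0,T];C^1(U))}$ and $\|b,c\|_{C^0}$. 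The key structural input is the Modica--Mortola bound $|G'(\ue)\nabla\ue|=\sqrt{2F(\ue)}|\nabla\ue|\le\tfrac\eps2|\nabla\ue|^2+\tfrac1\eps F(\ue)$, i.e.\ $G'(\ue)|\nabla\ue|$ is pointwise dominated by the energy density. Hence the lower-order terms are bounded in absolute value by $C\,\En(\ue(r,\cdot))$, while the principal term is bounded by $\|\varphi\|_{C^0}(2\En)^{1/2}\big(\int_U\tfrac1\eps w_\eps^2\,dx\big)^{1/2}$ by Cauchy--Schwarz.

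For the $2p$-th moment of the drift I would treat the two parts separately. The lower-order part, being bounded by $C\int_s^t\En\,dr\le C|t-s|\sup_{[0,T]}\En$, contributes at most $C|t-s|^{2p}\,\Ex[\sup_{[0,T]}\En^{2p}]\le C|t-s|^{2p}$ by \eqref{eq:energy2}, which is $\le C|t-s|^p$ since $|t-s|\le T$. For the principal part, Cauchy--Schwarz in time gives the bound $\sqrt2\,\|\varphi\|_{C^0}\,|t-s|^{1/2}(\sup_{[0,T]}\En)^{1/2}\big(\int_0^T\int_U\tfrac1\eps w_\eps^2\big)^{1/2}$; raising to the power $2p$ and applying Cauchy--Schwarz in $\Omega$ together with the uniform bounds \eqref{eq:energy2} and \eqref{eq:energy_decay1} yields $C|t-s|^p$. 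It is precisely this principal term that forces the exponent $p$ on the right-hand side, and I expect it to be the main obstacle: $w_\eps$ is controlled only in the space--time $L^2$ average, so it must be decoupled from $\sup\En$ by two Cauchy--Schwarz steps (one in time, one in $\Omega$) before the a priori bounds can be invoked.

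Finally, for the martingale $N$ I would use the Burkholder--Davis--Gundy inequality, $\Ex[|N|^{2p}]\le C_p\,\Ex[\langle N\rangle^p]$. As in \eqref{eq:prop_id8}, its bracket has density $\int_U\int_U \varphi(x)G'(\ue(r,x))\nabla\ue(r,x)\cdot\tilde{A}(r,x,y)\,\varphi(y)G'(\ue(r,y))\nabla\ue(r,y)\,dx\,dy$, which by $\|\tilde{A}\|_{C^0}$ and again the Modica--Mortola bound is at most $\|\varphi\|_{C^0}^2\|\tilde{A}\|_{C^0}\,\En(\ue(r,\cdot))^2$. Thus $\langle N\rangle\le C|t-s|\sup_{[0,T]}\En^2$, so $\Ex[\langle N\rangle^p]\le C|t-s|^p\,\Ex[\sup_{[0,T]}\En^{2p}]\le C|t-s|^p$ by \eqref{eq:energy2}. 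Combining the drift and martingale estimates through $|g(t)-g(s)|^{2p}\le 2^{2p-1}\big(|I_{\mathrm{drift}}|^{2p}+|N|^{2p}\big)$ then gives \eqref{eq:increments1} with the stated dependence of $C$.
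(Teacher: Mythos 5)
Your proposal is correct and follows essentially the same route as the paper's proof: Itô's formula for $\int_U G(\ue)\varphi\,dx$, cancellation of the $A:D^2\ue$ term against the Itô correction via integration by parts, the pointwise bound $\sqrt{2F(\ue)}|\nabla\ue|\le\frac{\eps}{2}|\nabla\ue|^2+\frac1\eps F(\ue)$ for the lower-order terms, Cauchy--Schwarz in time and in $\Omega$ combined with \eqref{eq:energy2} and \eqref{eq:energy_decay1} for the principal $w_\eps$ term, and Burkholder--Davis--Gundy for the martingale. The only difference is organizational (you group drift versus martingale, the paper enumerates four terms $I_1,\dots,I_4$), and your identification of the principal term as the source of the exponent $p$ is accurate.
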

\begin{proof}
Noting that  $G'(\ue)=\sqrt{2F(\ue)}$,  It\^{o}'s formula and \eqref{eq:sac3} imply that 
\begin{align}
\int_U & G(\ue(t,x))  \varphi(x) \, dx - \int_U G(\ue(s,x)) \varphi(x) \, dx  \notag\\
\,= \,& \int_U \Big[ \int_s^t  \varphi(x) \sqrt{2F(\ue(r,x))} \bigg[\Big(  \Delta \ue(r,x)-\frac{1}{\eps^2}F'(\ue(r,x)) \Big) \,dr\notag\\
&+\int_s^t \nabla \ue(r,x) \cdot X(dr,x) \notag\\
 &+ \int_s^t\frac{1}{2}  A(r,x):D^2 \ue(r,x)\,dr + \frac{1}{2}c(r,x)\cdot \nabla \ue(r,x) \,dr    \bigg] \, dx \label{eq:increments2} \\
&+\frac{1}{2} \int_s^t \int_U  G''(\ue(r,x))  \varphi(x) \nabla \ue(r,x)  \cdot A(r,x)  \nabla \ue(r,x) dx \, dr \notag.
\end{align}
Thus for $p \in \N$ one can write
\begin{gather}
\Ex \Big[ \Big| \int_U G(\ue(t))\, \varphi \, dx - \int G(\ue(s)) \, \varphi \, dx  \Big|^{2p} \Big] \leq 4^{p-1}(I_1 +I_2 + I_3 + I_4 ). \label{eq:increments3}
\end{gather}
Let us bound the individual terms:
\begin{align}
I_1 \,= \, & \Ex \bigg[\Big| \int_s^t \int_U \varphi(x) \, \sqrt{2F(\ue(r,x))} \, \Big[ \Delta \ue(r,x) -\frac{1}{\eps^2}F'(\ue(r,x)) \Big] \,dx \, dr \Big|^{2p} \bigg] \,\leq \notag\\
\leq \, & \| \varphi \|_{C^0(U)}^{2p} \, \Ex \bigg[ \Big( \int_s^t \int_U \frac{1}{\varepsilon} \, F(\ue(r,x)) \,dx dr \Big)^{p} \Big( \int_s^t \int_U \frac{1}{\eps} w_\eps(r,x)^2 \,dx dr \Big)^{p}  \bigg] \notag\\
\leq \, & \| \varphi \|_{C^0(U)}^{2p} \, \Ex \bigg[ \Big(\int_s^t \En(\ue(r,\cdot) dr \Big)^{2p} \bigg]^{1/2} \, \Ex \bigg[ \Big( \int_s^t \int_U \frac{1}{\eps} w_\eps(r,x)^2 \,dx dr \Big)^{2p} \, \bigg]^{1/2}  \notag\\
\leq \, & \| \varphi \|_{C^0( U)}^{2p} \, (t-s)^p \, \Ex \bigg[ \Big(\sup_{s \leq r \leq t} \En(\ue(r,\cdot) \Big)^{2p} \bigg]^{1/2} \Ex \bigg[ \Big( \int_s^t \int_U \frac{1}{\eps} w_\eps(r,x)^2 \,dx dr \Big)^{2p} \, \bigg]^{1/2} \notag\\
\leq \,& C (t-s)^p. \label{eq:increments4}
\end{align}
Here in the second and third line we have used Cauchy-Schwarz inequality. In the last line we have used \eqref{eq:energy2} and \eqref{eq:energy_decay1}. The second term can be bounded using Youngs inequality:
\begin{align}
I_2 \, = \, & \Ex \bigg[ \Big| \int_s^t \int_U \varphi \, \sqrt{2F(\ue(r,x))} \, \Big[ \nabla \ue(r,x) \,\cdot \,\big(\frac{1}{2}c(r,x)+b(r,x)\big) \Big] \,dx \, dr \Big|^{2p} \bigg] \notag \\
\leq \, & \| \varphi \|_{C^0( U)}^{2p} \,\Big\| \frac12 c+b \Big\|_{C^0([0,T] \times U)}^{2p} \, \Ex \bigg[ \Big| \int_s^t \En(\ue(r,\cdot)) dr \Big|^{2p} \bigg] \label{eq:increments5} \\
\leq \, & C (t-s)^{2p} \notag.
\end{align}
Here we have used the inequalities $|\nabla u_\eps \sqrt{2F(u_\eps)}|\leq \frac{\eps}{2}|\nabla u_\eps|^2+\frac{1}{\eps}F(u_\eps)$ and \eqref{eq:energy2} in the last line. For the martingale term we get using Burkholder-Davis-Gundy inequality in the second line and then Youngs inequality in the third line:
\begin{align}
I_3 \, = \, & \Ex \bigg[ \Big| \int_s^t \int_U \varphi(x) \, \sqrt{2F(\ue(r,x))} \, \Big[  \nabla \ue(r,x) \cdot \big( X(dr,x)-b(r,x)dr \big) \, \Big] \,dx \Big|^{2p} \bigg] \notag \\
\leq \, & \Ex \bigg[  \int_s^t \, \Big| \, \int_U \int_U \varphi(x) \, \sqrt{2F(\ue(r,x))} \nabla \ue(r,x) \cdot \notag\\
&\qquad \cdot\tilde{A}(r,x,y) \, \nabla \ue(r,y) \sqrt{2F(\ue(r,y))} \varphi(y)\,dx \,dy \, dr \Big|^{p} \bigg] \notag\\
\leq \,& \| \varphi \|_{C^0( U)}^{2p} \|\tilde{A} \|_{C^0([0,T] \times U)}^{p} \Ex \bigg[\Big( \int_s^t \, \Big( \,  \int_U  \, \sqrt{2F(\ue(r,x))} |\nabla \ue(r,x)|\,dx \Big)^2\,  dr \Big)^{p} \bigg] \notag\\ 
\leq \,& \| \varphi \|_{C^0( U)}^{2p} \|\tilde{A} \|_{C^0([0,T] \times U)}^{p}  \, \Ex \bigg[ \Big| \int_s^t \En(\ue(r,\cdot))^2 dr \Big|^{p} \bigg] \notag \\
\leq \, & C(t-s)^{p}. \label{eq:increments6} 
\end{align}
For the fourth term we write:
\begin{align}
I_4 \, = \, & \frac{1}{2}\, \Ex \bigg[ \Big|  \int_s^t \int_U \varphi(x) \, \sqrt{2F(\ue (r,x))} \, \Big( A(r,x): D^2 \ue(r,x) \Big) \notag\\
 &+ \, \varphi(x) G''(\ue(r,x)) \nabla \ue(r,x) \cdot A(r,x) \nabla \ue(r,x)  \, dx \, dr \Big|^{2p} \bigg] \label{eq:increments7} .
\end{align}
After a partial integration the second summand can be written as:
\begin{align}
 \int_U & \varphi(x) G''(\ue(r,x) \nabla \ue(x) \cdot A(r,x) \nabla \ue(x)  \, dx  \,=\notag\\
 =\, & \int_U \varphi(x) \, \nabla \Big(\sqrt{2F(\ue(r,x))} \Big)\cdot  A(r,x) \nabla \ue(x) dx \notag \\
=\, & - \int_U \nabla \varphi(x)   \cdot  A(r,x) \nabla \ue(x)  \sqrt{2F(\ue(r,x))} \,dx  \notag\\
&- \int_U \varphi(x) \, \sqrt{2F(\ue(r,x))} \big(\nabla \cdot \, A(r,x) \big)  \nabla \ue(r,x)  \, dx\notag\\
& -  \int_U \varphi(x) \, \sqrt{2F(\ue(r,x))} \, A(r,x) : D^2 \ue(r,x)  \, \Big)\, dx. \label{eq:increments8}
\end{align}
Noting that the terms involving $D^2 u$ in \eqref{eq:increments7} and \eqref{eq:increments8} cancel it remains to bound:
\begin{align}
\Ex \bigg[ & \Big| \int_s^t \int_U \nabla \varphi(x)   \cdot  A(r,x) \nabla \ue(x)  \sqrt{2F(\ue(r,x))} \,dx \notag\\
& + \int_s^t \int_U \varphi(x) \, \sqrt{2F(\ue(r,x))} \big(\nabla \cdot \, A(r,x) \big)  \nabla \ue(r,x)  \, dx \, ds \Big|^{2p} \bigg] \notag \\
&\leq \, \Big( 2 \|A \|_{C^0([0,T] ,C^1(U)} \| \varphi \|_{C^1(U))} \Big)^{2p} \Ex \bigg[ \Big| \int_s^t \En(\ue(r)) dr \Big|^{2p} \bigg] \notag\\
& \leq \, C (t-s)^{2p} \label{eq:increments9}.
\end{align}
This finishes the proof.
\end{proof}

Now we are ready to prove  our main theorem.
\begin{proof}[Proof of Theorem \ref{thm:tightness}]
As a first step we will show that the distributions of $G(\ue)$ are tight on $C([0,T], L^1(U))$. To this end it suffices to show the following two conditions (see e.g.  \cite[Theorem 3.6.4, page 54 ]{Da93} together with \cite[Theorem 8.3, page 56]{Bi99}. Note that conditions (8.3) and (8.4) in \cite{Bi99} are implied by our stronger assumption (ii)):
\begin{enumerate}
\item[(i)] (\emph{Compact Containment}) For every $\delta > 0$ there exists a compact set $K^\delta \subseteq L^1(U)$ such that for all $\eps > 0$
\begin{gather}
\Prob \Big[G(\ue(t)) \in K^\delta \, \text{for all} \, 0 \leq t \leq T \Big] \geq 1 - \delta
\end{gather}
\item[(ii)] (\emph{Weak tightness}) For every smooth testfunction $\varphi \in C^\infty(U)$ there exist positive $\alpha, \beta, C$ such that for all $0 \leq s < t \leq T$
\begin{gather}
\Ex \Big[ \Big| \int_U G(\ue(t,x)) \varphi(x) \, dx  - \int G(\ue(s,x)) \varphi(x) \,dx \Big|^{\alpha} \Big] \leq C |t-s|^{1+\beta}.
\end{gather}
\end{enumerate}
To prove the first statement note that
\begin{align}
\int_U \big| \nabla G(\ue(t,x)) \big| \, d x \, = \, & \int_U \big| \sqrt{2 F(\ue(t,x))} \big| \nabla \ue(t,x) \big| \, d x \notag \\
\leq\,& \int_U \frac{\eps}{2} \big| \nabla \ue(t,x) \big|^2 + \frac{1}{\eps} F(\ue(t,x)) \, dx.
\end{align}
Furthermore, \eqref{assumptions} implies $G(r) \leq C ( 1 + F(r))$ such that
\begin{align}
\int_U \big| G(\ue(t,x)) \big| \, dx \, \leq \, & C \int_U F(\ue(t,x)) dx + C |U|.
\end{align}
Thus applying Chebyshev's inequality equation \eqref{eq:energy2} yields  that
\begin{gather}
\Prob \Big[\sup_{ 0 \leq t \leq T} \| G(\ue(t) \|_{W^{1,1}(U)} \geq \lambda \Big] \to 0,
\end{gather}
for $\lambda \to \infty$ which together with Rellich's Theorem implies condition (i).

The second condition (ii) follows from \eqref{eq:increments1}. The tightness of $G(\ue)$ is thus proved. 

\vskip3ex

As a second step we prove that the distributions of $u_\eps$ are tight in $C([0,T],L^1(U))$. Denote by $G^{-1}$ the inverse function of $G$. Then similar to \cite[page 139]{Mo87} we observe that the operator $v \mapsto  G^{-1} \circ v$ is continuous from $L^1(U)$ to itself. In fact assume $v_i \to v$ as $i\to\infty$ in $L^1(U)$. The growth condition in \eqref{assumptions} implies that $G^{-1}$ is uniformly continuous on $\R$. This implies convergence in measure and convergence pointwise in $U$ for the sequence $G^{-1} \circ v_i$. Furthermore, using the growth condition once more one can see that 
$|G^{-1}\circ v_i|\,\leq\, C (|v_i|+1)$ which then implies by Vitali's Convergence Theorem that $G^{-1}(v_i)\to G^{-1}(v)$ in $L^{1}(U)$. Thus using the following Lemma \ref{lem:tight} we can conclude that the distributions of $\ue$ are tight on  $C\big( [0,T], L^1(U) \big)$ as well.

\vskip3ex

In particular, there exists a decreasing sequence $\varepsilon_k \downarrow 0$ such that the distributions of $\ue$ converge weakly to a limiting measure on $C\big( [0,T], L^1(U) \big)$. We may now use Skorohod's observation that we can find a subsequence $\varepsilon_k \downarrow 0$ such that the random functions $u_{\varepsilon_k}$ can be realized on a single probability space $(\tilde{\Omega}, \tilde{\F}, \tilde{\Prob})$. On this space the $u_{\varepsilon_k}$ converge almost surely in $C^0([0,T];L^1(U))$ towards $u$ (see \cite[page 9]{IW89} . By \eqref{eq:energy2} and Fatou's Lemma we can for almost all $\omega\in\Omega, t\in (0,T)$ select a subsequence $\eps_k'\to 0$ such that $\sup_{\eps_k'} E_{\eps_k'} (u_{\eps_k'}(t)) < \infty$. Thus using the Gamma convergence of the functionals $\En$ we can conclude that $u_{\eps_k'}(t)$ converges for a subsequence strongly in $L^1(U)$ to a limit $v\in BV (U; \{-1,1 \})$ and that
\begin{gather}
  \|v\|\,\leq\,\liminf_{\eps_k'\to 0} E_{\eps_k'}(u_{\eps_k'}(t))\label{eq:estGamma}
\end{gather}
But since $u_{\eps_k}\to u$ almost surely in $C([0,T],L^1(U))$  we have $v=u(t,\cdot)$. This proves that
$u(t,\cdot)\in BV(U;\{-1,1\})$; \eqref{eq:energy2}, \eqref{eq:estGamma} yield
\begin{gather*}
  \Ex\Big[ \sup_{0\leq t\leq T} \|u(t)\|_{BV(U)}^p\Big] \,<\,\infty.
\end{gather*}
which concludes the proof.
\end{proof}

\begin{lemma}\label{lem:tight}
Let $\big((X_t^{\eps}, t \in [0,T]) , \eps > 0)$ be a family of stochastic processes taking values in a separable metric space $E$. Let $\tilde{E}$ be another separable metric space and $F\colon E \to \tilde{E}$ a continuous function. Suppose the family of distributions of $X_t^\eps$ is tight on $C([0,T],E)$. Then the family of distributions of $F(X_t^\eps)$ is tight on $C([0,T],\tilde{E})$.
\end{lemma}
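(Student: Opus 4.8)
The plan is to realize the map ``compose with $F$'' at the level of path spaces and to use that tightness (in the sense of the existence of compact sets carrying probability arbitrarily close to $1$) is preserved under continuous maps. I would define
\[
  \hat F\colon C([0,T],E)\longrightarrow C([0,T],\tilde E),\qquad (\hat F x)(t):=F\big(x(t)\big).
\]
Since $F$ is continuous and each $x$ is a continuous $E$-valued path, $\hat F x$ is a continuous $\tilde E$-valued path, so $\hat F$ is well defined; moreover the $C([0,T],\tilde E)$-valued random variable $F(X^\eps)$ is precisely $\hat F(X^\eps)$, and its law is the pushforward under $\hat F$ of the law of $X^\eps$.

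The core of the argument is to show that $\hat F$ is continuous for the uniform metrics on both path spaces. I would argue by sequences: if $x_n\to x$ in $C([0,T],E)$, then $\sup_{t\in[0,T]} d_E(x_n(t),x(t))\to 0$, and the relevant set $\mathcal C:=\{x(t):t\in[0,T]\}\cup\bigcup_n\{x_n(t):t\in[0,T]\}$ is relatively compact in $E$. This is the step I expect to require the most care, since one cannot invoke uniform continuity of $F$ on all of $E$, only on compacta. To exhibit a single compact set carrying $\mathcal C$ I would consider the one-point compactification $\N\cup\{\infty\}$ and the map $\Phi\colon[0,T]\times(\N\cup\{\infty\})\to E$ given by $\Phi(t,n)=x_n(t)$ and $\Phi(t,\infty)=x(t)$; uniform convergence is exactly the statement that $\Phi$ is continuous, and since its domain is compact, $\overline{\mathcal C}=\Phi\big([0,T]\times(\N\cup\{\infty\})\big)$ is compact.

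On the compact set $\overline{\mathcal C}$ the continuous map $F$ is uniformly continuous, so for every $\kappa>0$ there is $\rho>0$ with $d_{\tilde E}(F(a),F(b))<\kappa$ whenever $a,b\in\overline{\mathcal C}$ and $d_E(a,b)<\rho$. For $n$ large enough $\sup_t d_E(x_n(t),x(t))<\rho$, hence $\sup_t d_{\tilde E}\big((\hat F x_n)(t),(\hat F x)(t)\big)\le\kappa$; thus $\hat F x_n\to \hat F x$ uniformly. Since $C([0,T],E)$ is metric, this sequential continuity yields continuity of $\hat F$.

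Finally I would read off tightness directly from the definition. For each $\delta>0$ there is a compact $\mathcal K^\delta\subseteq C([0,T],E)$ with $\inf_{\eps>0}\Prob[X^\eps\in\mathcal K^\delta]\ge 1-\delta$. Continuity of $\hat F$ makes $\hat F(\mathcal K^\delta)$ a compact subset of $C([0,T],\tilde E)$, and the inclusion of events $\{X^\eps\in\mathcal K^\delta\}\subseteq\{\hat F(X^\eps)\in\hat F(\mathcal K^\delta)\}$ gives $\inf_{\eps>0}\Prob[F(X^\eps)\in\hat F(\mathcal K^\delta)]\ge 1-\delta$. As $\delta>0$ is arbitrary, the family of laws of $F(X^\eps)$ is tight on $C([0,T],\tilde E)$.
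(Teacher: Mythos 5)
Your proof is correct, but it takes a genuinely different route from the paper's. You work directly with the Prokhorov definition of tightness on the path space: you lift $F$ to a map $\hat F$ between $C([0,T],E)$ and $C([0,T],\tilde E)$, prove that $\hat F$ is continuous for the uniform metrics (the careful point being that the trace $\{x(t)\}\cup\bigcup_n\{x_n(t)\}$ of a uniformly convergent sequence is relatively compact, which you establish via the map $\Phi$ on $[0,T]\times(\N\cup\{\infty\})$, so that $F$ may be applied uniformly continuously there), and then push forward the compact sets $\mathcal K^\delta$ witnessing tightness. The paper instead invokes the Ethier--Kurtz characterization of tightness on $C([0,T],E)$ via two conditions --- a pointwise compact-containment condition in $E$ and a uniform bound on the modulus of continuity --- and checks that each condition is inherited by $F(X^\eps_t)$, again using uniform continuity of $F$ on the compact containment set. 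Both arguments hinge on the same analytic fact (Heine--Cantor on a compact subset of $E$); yours deploys it once to get continuity of $\hat F$ and then uses only the bare definition of tightness, which makes the probabilistic part of the argument a one-line pushforward and avoids any appeal to the equivalence theorem for tightness criteria. The paper's version stays closer to the form in which tightness of $G(u_\eps)$ was actually verified (compact containment plus increment estimates), so the two conditions are already at hand there. Your approach is, if anything, slightly more self-contained and more general; there is no gap.
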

\begin{proof}
According to \cite[Thm. 7.2 and Rem. 7.3 on page 128]{EK86} the distributions of $X_t^\eps$ are tight if and only if
\begin{enumerate}
\item  For all $\eta > 0$ there exists a compact set $\Gamma_\eta \subseteq E$ such that for all $\eps$
\begin{gather}
\Prob\Big[ X_t^\eps \in \Gamma_\eta, \quad \forall 0 \leq t \leq T \Big] \, \geq \, 1 - \eta \label{eq:cond1}
\end{gather} 
\item For all $\eta > 0$ there exists $\delta > 0$ such that
\begin{gather}
\sup_t \Prob \Big[\omega\big( X_t^\eps, \delta \big) \geq \eta  \Big] \, \leq \, \eta, \label{eq:cond2}
\end{gather}
\end{enumerate}
where the modulus of continuity is defined as $\omega\big(X_t, \delta \big) = \sup_{t-\delta \leq s \leq t+ \delta} d(X_t,X_s)$.

It is clear from the continuity of $F$ that the $F(X_t^\eps)$ satisfy the property corresponding to \eqref{eq:cond1} if $X_t$ does. 

To see that $F(X_t)$ also satisfies \eqref{eq:cond2} fix $\eta>0$ and the set $\Gamma_{\eta/2}$ such that
\begin{gather*}
\Prob\Big[ X_t^\eps \in \Gamma_{\eta/2}, \quad \forall 0 \leq t \leq T \Big] \, \geq \, 1 - \eta/2. \end{gather*} 
As $\Gamma_{\eta/2}$ is compact $F$ is uniformly continuous when restricted to $\Gamma_{\eta/2}$. Thus one can choose $\eta'$ such that $d(x,y) \leq \eta'$ implies $d(F(x),F(x)) \, \leq \eta$ for all $x,y \in \Gamma_{\eta/2}$. Thus choosing $\delta$ small enough such that $\sup_t \Prob \Big[\omega\big( X_t, \delta \big) \geq \eta'  \Big] \, \leq \, \eta/2$ one obtains
\begin{align}
\sup_t \Prob \Big[\omega\big(F( X_t), \delta \big) \geq \eta  \Big] \, \leq \,& \sup_t \Prob \Big[X_t \notin \Gamma_{\eta/2}  \Big] +  \sup_t \Prob \Big[\omega\big( X_t, \delta \big) \geq \eta'  \Big]  \notag\\
\leq \, & \frac{\eta}{2} + \frac{\eta}{2}.
\end{align}
This finishes the proof.
\end{proof}

\section{Formal sharp interface limit}
In this section we discuss the sharp interface limit $\eps\to 0$ of the stochastic Allen--Cahn equation \eqref{eq:sac}. We will only provide formal arguments and do not intend to give a rigorous justification of a stochastically perturbed mean curvature flow and of the convergence of \eqref{eq:sac} to such a stochastic evolution. Such an analysis is quite delicate and beyond the  aim of this paper.

Our  approach is based on a formal sharp interface limit in a localized version of Proposition \eqref{prop:d-energy-calc} (see \eqref{eq:d-loc-energy} below). For the (deterministic) Allen-Cahn equation such an approach has been successfully used to prove convergence to mean curvature flow (see \cite{Ilma93,MuRoe11}). 

%
We start  with the formulation of a stochastically driven mean curvature flow that formally corresponds to our stochastic Allen-Cahn equation. In the case of a Brownian vector field of the form \eqref{eq_SDEBM} this could be formulated as a stochastic evolution of hypersurfaces $(\Gamma_t)_{t\in (0,T)}$ that is (locally) given by a parametrization $\phi: M\times (0,T)\to\Rn$, $M\subset \Rn$ a smooth $(n-1)$-dimensional reference manifold, such that 
\begin{align}
	d\phi(t,x)\,&=\, \vec{H}(\phi(t,x))\,dt - X^k(t,\phi(t,x))\circ dB^k(t) -X^0(t,\phi(t,x))\circ dt , \label{eq:smcf}
\end{align}
or in It{\^o} formulation
\begin{align}
	d\phi(t,x)\,&=\, \vec{H}(\phi(t,x))\,dt - X^k(t,\phi(t,x)) dB^k(t)  - X^0(t,\phi(t,x)) dt \notag\\
	 &\qquad +\frac{1}{2}DX^k(t,\phi(t,x))X^k(t,\phi(t,x))\,dt. \label{eq:smcf-ito}
\end{align}
As explained above we make a connection between \eqref{eq:sac} and \eqref{eq:smcf} by formally passing to the limit in an equality for the the time-derivative of a \emph{localized} energy and to derive a Brakke-type (in)equality in the limit. 

As above in \eqref{eq:def-mu} we  consider for any $\eta\in C^2(\overline{U})$ and $u\in W^{1,2}(U)$ 
\begin{align}
	\mu_\eps^t(\eta)\,:=\, \int_U \Big(\frac{\eps}{2}|\nabla u_\eps(t)|^2(x) + \frac{1}{\eps}F(u_\eps(t)(x))\Big)\eta(x)\,dx. \label{eq:def-loc-energy}
\end{align}
We then obtain by similar calculations as in the proof of Proposition \ref{prop:d-energy-calc} for a solution $\ue$ of \eqref{eq:sac}
\begin{align}
  \mu_\eps^{t_1}(\eta)&  - \mu_\eps^{t_0}(\eta)   \notag\\
  &= - \int_{t_0}^{t_1} \int_U \eta \frac{1}{\eps} w_\eps^2  \,dx  \, dt +  \int_U\int_{t_0}^{t_1}  \eta w_\eps \, \nabla \ue \cdot X(dt) \,  dx\notag\\ 
&- \frac12  \int_{t_0}^{t_1} \int_U \eta c \cdot w_\eps\nabla \ue \, dx \, dt   \notag\\
&+ \frac 12 \int_{t_0}^{t_1} \mu_\eps^t \Big( \eta \partial_j \Big[  \partial_{x_i} \tilde{a}_{ij} -  \partial_{y_i} \tilde{a}_{ij} \Big] \, \,\Big) \, dt\notag\\
& + \frac12 \int_{t_0}^{t_1} \int_U  \eta \eps \,  \partial_i \ue \,  \partial_{x_k} \partial_{y_k}  \tilde{a}_{ij} \, \partial_j \ue \, dx \, dt\notag\\
&  + \int_{t_0}^{t_1}  \int_U \eta \eps\,  \partial_i \ue  \, \partial_k \big[ \partial_{x_j} \tilde{a}_{ij}  \big]\,  \partial_k  \ue \, dx \, dt \notag\\
&-  \int_{t_0}^{t_1} \int_U \eta \eps\,   \partial_i \ue  \, \partial_j \big[ \partial_{x_k} \tilde{a}_{ij} \big] \, \partial_k  \ue \, dx \, dt\notag \\
	&+ \int_{t_0}^{t_1} \int_U \nabla\eta\cdot w_\eps\nabla \ue\, dx \, dt  -\int_U\int_{t_0}^{t_1}  \eps (\nabla\eta\cdot \nabla\ue)\nabla\ue\cdot X(dt)\,dx\notag \\
	&+  \int_{t_0}^{t_1}  \mu_\eps^t\big(\partial_j\eta\partial_{x_i}\tilde{a}_{ij} + \frac{1}{2} A:D^2\eta\big) \, dt\notag \\
	&+  \int_{t_0}^{t_1} \int_U  \frac{\eps}{2}\nabla\eta\cdot\nabla\ue (c\cdot\nabla\ue) - \eps \partial_j\eta\partial_i\ue \partial_{x_k} \tilde{a}_{ij}\partial_k \ue\,dx\,dt. \label{eq:d-loc-energy}
\end{align}
For the special choice \eqref{eq_SDEBM} of Brownian vector fields this equality yields
\begin{align}
  & \mu_\eps^{t_1}(\eta)  - \mu_\eps^{t_0}(\eta)   \notag\\
  =\,& - \int_{t_0}^{t_1} \int_U\eta\frac{1}{\eps} w_\eps^2  \,dx  \, dt +  \int_U\int_{t_0}^{t_1}  \eta w_\eps \, \nabla \ue \cdot \big(  X^k\,dB_k(t) + X_0 \, dt \big) \,  dx\notag\\
\,&- \frac12  \int_{t_0}^{t_1} \int_U \eta  DX^kX^k \cdot \nabla \ue w_\eps\, dx \, dt   \,+ \notag\\
\,&+ \frac 12 \int_{t_0}^{t_1} \mu_\eps^t \Big( \eta(\nabla\cdot X^k)^2 - \eta\tr (DX^k DX^k) \,\Big) \, dt\notag\\
\,& + \int_{t_0}^{t_1} \int_U  \frac{\eps}{2}\eta|DX^T\nabla\ue|^2 + \eta \eps\nabla\ue\cdot DX^kDX^k\nabla\ue \, dx \, dt\notag\\
\,& -  \int_{t_0}^{t_1} \int_U \eta\eps  (\nabla\cdot  X^k)\nabla\ue\cdot DX^k\nabla\ue \, dx \, dt  \notag\\
	&+ \int_{t_0}^{t_1} \int_U \nabla\eta\cdot w_\eps\nabla \ue\, dx \, dt \notag \\
	&-\int_U\int_{t_0}^{t_1}  \eps (\nabla\eta\cdot \nabla\ue)\nabla\ue\cdot  \big( X^k \,dB_k(t) + X^0 \, dt \big)\,dx \notag \\
	&+  \int_{t_0}^{t_1}  \mu_\eps^t\big(\nabla\eta\cdot X^k \nabla\cdot X^k + \frac{1}{2} X^k\cdot D^2\eta X^k\big) \, dt  \notag \\
	&+  \int_{t_0}^{t_1} \int_U  \frac{\eps}{2}\nabla\eta\cdot\nabla\ue (DX^k X^k \cdot\nabla\ue) - \eps \nabla\eta\cdot X^k \nabla\ue\cdot DX^k\nabla\ue\,dx\,dt. \label{eq:loc-energy-spca}
\end{align}
In an analogous way we consider the derivative of the localized surface area energy
\begin{align}
	\ar^\eta(\Gamma) \,&=\, \int_\Gamma \eta(x) \,d\Ha^{n-1}(x), \label{eq:loc-energy-sharp}
\end{align}
for $\eta\in C^2(\overline{U})$ and a smooth hypersurface $\Gamma\subset \Rn$.
For the first and second variation of the localized energy \eqref{eq:loc-energy-sharp} in the direction of a vector field $Y\in C^2_c(\Rn,\Rn)$ we obtain by using \cite{Simo83}
\begin{align}
	\delta \ar^\eta(\Gamma)(Y) \,=\, &\int_\Gamma \Big(\nabla \eta^\perp\cdot Y - \eta \vec{H}\cdot Y\Big) \,d\Ha^{n-1} \label{eq:fst-var}\\
	\delta^2 \ar^\eta(\Gamma)(Y,Y) \,=\, &\int_\Gamma \eta\Big( (\dive_{\tan} Y)^2 + \sum_{i=1}^{n-1} |D_{\tau_i}^\perp Y|^2 - \sum_{i,j=1}^{n-1}(\tau_i\cdot D_{\tau_j}Y)(\tau_j\cdot D_{\tau_i}Y)\Big)\,d\Ha^{n-1} \notag\\
	&+\int_\Gamma \Big(2(\nabla\eta\cdot Y)(\dive_{\tan} Y) + Y\cdot D^2\eta Y \Big)\,d\Ha^{n-1}, \label{eq:scnd-var}
\end{align}
where for $x\in \Gamma$ we have chosen an arbitrary orthonormal basis $\tau_i(x),\dots,\tau_{n-1}(x)$ of $T_x\Gamma$ and where $\perp$ denotes the normal part of the corresponding vectors.\\
A formal application of It{\^o} formula yields therefore for the formal stochastic evolution \eqref{eq:smcf}
\begin{align}
	d\ar^\eta(\Gamma_t) \,&=\, \int_{\Gamma_t} -\eta \vec{H}\cdot \Big(\vec{H}\,dt - X^k dB^k(t) -X^0 dt +\frac{1}{2}DX^kX^k\,dt\Big)\,d\Ha^{n-1} \notag\\
	&+\int_{\Gamma_t} \nabla\eta^\perp \cdot \Big(\vec{H}\,dt - X^k dB^k(t) -X^0 dt +\frac{1}{2}DX^kX^k\,dt\Big)\,d\Ha^{n-1} \notag\\
	&+ \frac{1}{2} \int_{\Gamma_t} \eta \Big( (\nabla_{\tan} \cdot X^k)^2 + |D_{\tau_i}^\perp X^k|^2 - \sum_{i,j}(\tau_iD_{\tau_j}X^k)(\tau_jD_{\tau_i}X^k)\Big)\,d\Ha^{n-1} \, dt\notag \\
	& +  \int_{\Gamma_t} (\nabla_{\tan}\cdot X^k)(\nabla \eta\cdot X^k) \,d\Ha^{n-1}\, dt\notag \\
	& +\frac{1}{2} \int_{\Gamma_t} X^k\cdot D^2\eta X^k\,d\Ha^{n-1}\, dt. \label{eq:d-area-sharp}
\end{align}
This has to be compared with the sharp interface limit of \eqref{eq:loc-energy-spca}. We assume that the limit of a sequence $(u_\eps)_{\eps>0}$ is given by a smooth evolution $(\Gamma_t)_{t\in (0,T)}$ as above, that for all $t\in (0,T)$ the following convergence properties hold
\begin{align*}
	\mu_\eps^t,\,\eps |\nabla\ue(t,\cdot)|^2\,\LL^n \,&\to c_0\Ha^{n-1}\lfloor \Gamma_t,
	\\
	\eps \nu_\eps(t,\cdot)\otimes\nu_\eps(t,\cdot)\,\LL^n&\to\, c_0\nu(t,\cdot)\otimes\nu(t,\cdot)\Ha^{n-1}\lfloor \Gamma_t,\\
	\frac{w_\eps \nu_\eps}{\eps|\nabla u_\eps|}(t,\cdot)\,\LL^n &\to\, c_0 \vec{H}(t,\cdot)\Ha^{n-1}\lfloor \Gamma_t,\\
	\frac{1}{\eps}w_\eps^2(t,\cdot)\,\LL^n &\to\, c_0 {H}^2(t,\cdot) \Ha^{n-1}\lfloor \Gamma_t,
\end{align*}
where to $x\in \Gamma_t$ we have associated a unit normal vector $\nu(t,x)$. We refer  for example to \cite{MuRoe11} for a precise formulation and rigorous justification of such convergence properties under specific assumptions.\\
 We then obtain from \eqref{eq:loc-energy-spca}
\begin{align}
  & \frac{1}{c_0} \lim_{\eps\to 0} d\mu_\eps^{t}(\eta)  \notag\\
  =\,& -  \int_{\Gamma_t} \eta |\vec{H}|^2  \,d\Ha^{n-1}\, dt +  \int_{\Gamma(t)}  \eta \vec{H} \cdot \big( X^k\,dB_k(t)+ X^0 dt \big) \,  d\Ha^{n-1}\notag\\ 
\,&- \frac12   \int_{\Gamma(t)} \eta  DX^kX^k \cdot \vec{H}\, d\Ha^{n-1}\, dt   \notag\\
\,&+ \frac 12  \int_{\Gamma(t)} \eta(\nabla\cdot X^k)^2 - \eta\tr (DX^k DX^k) \,d\Ha^{n-1}\, dt\notag\\
\,& +   \int_{\Gamma(t)}  \eta|DX^T\nu|^2 +\frac12 \eta \nu\cdot DX^kDX^k\nu \, d\Ha^{n-1}\, dt\notag\\
\,& -   \int_{\Gamma(t)} \eta  (\nabla\cdot  X^k)\nu\cdot DX^k\nu\, d\Ha^{n-1}\, dt\notag\\
	&+  \int_{\Gamma(t)} \nabla\eta\cdot \vec{H}\, d\Ha^{n-1}\, dt \notag\\ 
	&-\int_{\Gamma(t)}  (\nabla\eta\cdot \nu)\nu\cdot X^k \,d\Ha^{n-1}\,dB_k(t) -\int_{\Gamma(t)}  (\nabla\eta\cdot \nu)\nu\cdot X^0 \,d\Ha^{n-1}\,dt\notag \\
	&+    \int_{\Gamma(t)}\big(\nabla\eta\cdot X^k \nabla\cdot X^k + \frac{1}{2} X^k\cdot D^2\eta X^k\big) \, d\Ha^{n-1}\, dt\notag \\
	&+   \int_{\Gamma(t)}\Big(\frac{1}{2} \nabla\eta\cdot\nu (DX^k X^k \cdot\nu) -  \nabla\eta\cdot X^k \nu\cdot DX^k\nu\Big)\,d\Ha^{n-1}\, dt. \label{eq:loc-energy-spca-limit}
\end{align}
Comparing this equation with \eqref{eq:d-area-sharp} and using the calculations from \cite{Le10} we obtain 
\begin{align}
  \frac{1}{c_0} \lim_{\eps\to 0} d \mu_\eps^{t}(\eta)\,&
  =\, d\ar^\eta(\Gamma_t) + \int_{\Gamma(t)} \eta (\nu\cdot X^k  \nu)^2\,d\Ha^{n-1}\, dt. \label{eq:diff-brakke}
\end{align}
We therefore obtain in the formal sharp interface limit of the localized energy equality one additional positive term compared to the corresponding energy derivative for \eqref{eq:smcf}. However, this is consistent with recent work by Le \cite{Le10} that compares the formal sharp interface limit of the second inner variation of $E_\eps$ to the second variation of the area functional and identifies exactly the same extra contribution.\\ 
In this respect the discrepancy should not be interpreted as an indication that \eqref{eq:sac} fails to approximate the stochastic mean curvature flow \eqref{eq:smcf}, but rather as consistent and a consequence of using an energy based proof. In contrast to the deterministic case,  by  It{\^o}'s formula the second inner variation of $E_\eps$ enters in the energy derivative and necessarily produces this extra term.

\def\cprime{$'$}

\end{document}